\documentclass[11pt,a4paper]{article}
\usepackage{xcolor}
\usepackage{amsmath, amsthm, amssymb, amsfonts}
\usepackage{mathtools}
\usepackage{geometry}
\usepackage{hyperref}
\usepackage{enumitem}
\usepackage{bm}
\usepackage[T1]{fontenc}

\hypersetup{
  colorlinks=true,
  linkcolor=blue!60!black,
  citecolor=blue!60!black,
  urlcolor=blue!60!black
}

\theoremstyle{plain}
\newtheorem{theorem}{Theorem}[section]
\newtheorem{proposition}[theorem]{Proposition}
\newtheorem{property}[theorem]{Property}
\newtheorem{corollary}[theorem]{Corollary}
\newtheorem{lemma}[theorem]{Lemma}

\theoremstyle{definition}
\newtheorem{definition}[theorem]{Definition}

\theoremstyle{remark}
\newtheorem{remark}[theorem]{Remark}

\usepackage{authblk}
\usepackage{indentfirst}
\usepackage{comment}

\title{\textbf{ Factorization of  Isomorphisms of $(H,\theta)$-twisted Lie algebroids.}}
\author[1]{Nasser Saipele nansidi}
\author[2,3]{Bertuel Tangue Ndawa}
\author[4]{Luc Emery Diekouam Fotso }
\author[5]{Emmanuel Fouotsa }
\author[1]{Joseph Dongho}
\affil[1]{Faculty of science, University of Maroua, Cameroon.}
\affil[2]{University Institute of Technology, University of Ngaoundere, Cameroon.}
\affil[3]{Institut des Hautes \'{e}tudes Scientifiques, Universit\'{e} Paris-Saclay, France.}
\affil[4]{Higher Teacher Training College, University of Maroua, Cameroon.}
\affil[5]{ Higher Teacher Training College of Bambili, University of Bamenda, Cameroon.}

\begin{document}
\maketitle
\vspace*{1cm}
\begin{center}
In memory of Professor Joseph Dongho, who was present at the beginning of this work.
\end{center}
\vspace*{1cm}
\begin{abstract}
We study the isomorphism groupoid $\mathcal{T}(M)$ of $\theta$-almost twisted Poisson
($\theta$-atP) structures on a smooth manifold $M$, focusing on the internal
structure of its morphisms. A morphism in $\mathcal{T}(M)$ is a $C^\infty(M)$-linear
isomorphism $\Phi:\Omega^1(M)\to\Omega^1(M)$ that simultaneously intertwines the
anchor maps and the $(H,\theta)$-twisted Koszul brackets associated with two
$\theta$-atP structures. Every such morphism induces a canonical
isomorphism in $\theta$-atP cohomology.

We define a classifying functor
$$
\Delta : \mathrm{Mor}(\mathcal{T}(M)) \longrightarrow (Z^1_{\mathrm{dR}}(M)	,+),
\qquad \Delta(\Phi)=\theta' - \theta,
$$
which is additive under composition and partitions the morphisms into two
complementary families: the sub-groupoid $\mathcal{T}_{\mathrm{fix}}=\ker\Delta$ of isomorphisms
preserving $\theta$, and the family $\mathcal{T}_{\mathrm{mod}}$ of isomorphisms shifting $\theta$.
We describe each element in this partition.

\end{abstract}

\section{Introduction.}

Lie algebroids  arise across differential geometry, simultaneously generalizing  Lie algebras, tangent bundles, and foliations. The Lie algebroids of a Poisson manifold is the classical instance of this construction. Given a Poisson bivector $\Lambda$ on a
smooth manifold $M$, the cotangent bundle $T^*M$ carries a Lie algebroid
structure via the Koszul bracket
$
[\ ,\ ]_K:\Omega^1(M)\times\Omega^1(M)\to\Omega^1(M),\quad
[\alpha,\beta]_K=\mathcal{L}_{\Lambda^\#(\alpha)}(\beta)
-\mathcal{L}_{\Lambda^\#(\beta)}(\alpha)
-d\Lambda(\alpha,\beta),
$
and the anchor $\Lambda^\#:\Omega^{1}(M)\to \mathfrak{X}^{1}(M)$. The Chevalley--Eilenberg
cohomology of this algebroid reproduces Poisson cohomology in the sense
of Lichnerowicz~\cite{Lichnerowicz1977} and Huebschmann~\cite{Huebschmann1990}.

Motivated by topological sigma models and string
geometry, \v{S}evera and Weinstein~\cite{SeveraWeinstein2001} introduce on a manifold $M$, a  generalization of Poisson structure   under the name of twisted Poisson structure. This is  a bivector field $\Lambda$ together with   a closed 3-form $H\in\Omega^{3}(M)$
satisfying $
\frac{1}{2}[\Lambda,\Lambda]=\Lambda^{\#}(H).
$
One obtains a  (H,0)-twisted  Lie algebroid. This is a Lie algebroid  on $M$ induced by the $H$-deformation of  Koszul bracket.
The geometric quantization of these structures was studied by
Petalidou~\cite{Petalidou2007}, and their integrability theory by
Crainic--Fernandes~\cite{CrainicFernandes2003}.

A $\theta$-almost twisted Poisson manifold introduced in~\cite{NansidiTangueDongho2025b}
is inspired by sigma models with non-closed background 3-form fluxes~\cite{Chatzistavrakidis2020}.
More precisely, a $\theta$-almost twisted Poisson ($\theta$-atP) structure on a smooth manifold $M$
is a triple $(\Lambda,H,\theta)$, where $\Lambda\in \mathfrak{X}^{2}(M)$ is a bivector field,
and $H\in\Omega^{3}(M)$ and $\theta\in\Omega^{1}(M)$ are a 3-form and 1-form, respectively,
satisfying the equations
\begin{equation*}
	d\theta=0,\qquad
	\Lambda^{\#}(\theta)=0,\qquad
	\tfrac{1}{2}[\Lambda,\Lambda]=\Lambda^{\#}(H),\qquad
	dH=\theta\wedge H.
\end{equation*}
Each such triple defines an $(H,\theta)$-twisted Lie algebroid
$$
A(\Lambda,H,\theta)=\bigl(T^*M,[\,\cdot\,,\,\cdot\,]_{H,\theta},\Lambda^{\#}\bigr),
$$
where $[\,\cdot\,,\,\cdot\,]_{H,\theta}$ is the $(H,\theta)$-deformation of the
Koszul bracket introduced in~\cite{NansidiTangueDongho2025b} (see Section~\ref{sec:tools}).
This class simultaneously generalizes twisted Poisson algebroids (recovered when $\theta=0$),
and ordinary Poisson algebroids (recovered when $\theta=0$ and $H=0$).
The $\theta$-atP cohomology $H^{\ast}_{\theta\text{-atP}}(M,\Lambda,H,\theta)$,
constructed in~\cite{NansidiTangueDongho2025b} as the Chevalley--Eilenberg cohomology of
$A(\Lambda,H,\theta)$, is the corresponding cohomology theory.

This paper is inspired by~\cite{TNB2},  and addresses the following question:
what are the isomorphisms between two $(H,\theta)$-twisted Lie algebroids on $M$,
how do they form a groupoid, and how can they be factorized?
More precisely, given two $\theta$-atP structures $(\Lambda,H,\theta)$,  and
$(\Lambda',H',\theta')$ on $M$, we aim to characterize all $C^{\infty}(M)$-linear
isomorphisms $\Phi:\Omega^{1}(M)\to\Omega^{1}(M)$ satisfying both the anchor
condition $
(\Lambda')^{\#}\circ\Phi=\Lambda^{\#}
$
and the bracket condition
$$
\Phi\bigl([\alpha,\beta]_{H,\theta}\bigr)
=
[\Phi\alpha,\Phi\beta]_{H',\theta'},
\qquad \forall\,\alpha,\beta\in\Omega^{1}(M).
$$

\subsection{Main results.}
We denote by  $\mathcal{S}(M)$ the set of all $\theta$-atP structure on $M$. 
The isomorphisms of $(H,\theta)$-twisted Lie algebroids form a groupoid  $\mathcal{T}(M)$  over $\mathcal{S}(M)$, called the isomorphism groupoid of $(H,\theta)$-twisted Lie algebroids on $M$ (Proposition~\ref{thm:groupoid}).
 The structural
analysis of $\mathcal{T}(M)$ is organized around the classifying functor
$$
\Delta:\mathrm{Mor}\!\left(\mathcal{T}(M)\right)
\longrightarrow\bigl(Z^{1}_{\mathrm{dR}}(M),+\bigr),
\qquad \Delta(\Phi)=\theta'-\theta.
$$

The map $\Delta$ is additive under composition and sign-reversing under inversion
(Property~\ref{prop:Delta-functeur}).  Every $\Phi\in\mathrm{Mor}\!\left(\mathcal{T}(M)\right)$
preserves the characteristic sub-bundle $\ker\Lambda^{\#}\subset\Omega^{1}(M)$;
in particular, two isomorphic $(H,\theta)$-twisted Lie algebroids necessarily share the
same characteristic distribution (Proposition~\ref{preservation_ancre}).

The functor $\Delta$ partitions $\mathrm{Mor}\!\left(\mathcal{T}(M)\right)$ into two complementary
families. The sub-groupoid
$\mathcal{T}_{\mathrm{fix}}=\ker\Delta$ (Section~\ref{sec:tfix}) consists of
isomorphisms preserving $\theta$.
The gauge transformations $\Phi_{B}=\mathrm{id}+B^{\flat}\circ\Lambda^{\#}$, for
$B\in\Omega^{2}(M)$ rendering $\Phi_{B}$ invertible, form a sub-groupoid
$\mathrm{Gau}(M)\subset \mathcal{T}_{\mathrm{fix}}$ in which composition corresponds
to addition of two-forms (Propositions~\ref{prop:preserve}--\ref{prop:gauge}).
Our main result on $\mathcal{T}_{\mathrm{fix}}$ (Theorem~\ref{thm:classification}) states that every
$\Phi\in\mathcal{T}_{\mathrm{fix}}$ admits a factorization
\begin{equation*}
	\Phi=\Psi\circ\Phi_{B},
\end{equation*}
where $\Phi_{B}\in\mathrm{Gau}(M)$ and $\Psi$ is a $C^{\infty}(M)$-linear
automorphism of $\Omega^{1}(M)$ satisfying
$\Psi(\alpha)-\alpha\in\ker\Lambda^{\#}$ for all $\alpha\in\Omega^{1}(M)$.
The residual factor $\Psi$ thus acts trivially on the quotient
$\Omega^{1}(M)/\ker\Lambda^{\#}$. In particular, when $\Lambda$ is nondegenerate,
$\ker\Lambda^{\#}=0$ and $\mathcal{T}_{\mathrm{fix}}$ coincides with $\mathrm{Gau}(M)$.
The family $\mathcal{T}_{\mathrm{mod}}$ (Section~\ref{sec:Tmod}) consists of isomorphisms
shifting $\theta$.
The canonical examples of morphisms in $\mathcal{T}_{\mathrm{mod}}$   are the conformal Casimir transformations
$\Psi_{f}(\alpha)=e^{f}\alpha$, defined for Casimir functions
$$
f\in\mathrm{Cas}(\Lambda)
=\{f\in C^{\infty}(M):\Lambda^{\#}(df)=0\},
$$
which shift $\theta$ by the exact form $df$ (Proposition~\ref{thm:main}).
We show that every $\Phi\in\Delta^{-1}(df)$ factors as
$\Phi=\Psi\circ\Theta_{f,B}$, where
$
\Theta_{f,B}(\alpha)=e^{f}\alpha+i_{\Lambda^{\#}\!\alpha}\,B,
$
and $\Psi$ is a residual automorphism trivial on $\Omega^{1}(M)/\ker\Lambda^{\#}$
(Theorem~\ref{thm:structure-fibre}).
Finally, we establish that the conformal-gauge sub-groupoid $\mathcal{T}_{\mathrm{cg}}$,
generated under composition by all gauge transformations and all conformal Casimir transformations,
satisfies
$\Delta\!\left(\mathrm{Mor}(\mathcal{T}_{\mathrm{cg}})\right)\subseteq B^{1}_{\mathrm{dR}}(M)$.
Consequently, no morphism in $\mathcal{T}_{\mathrm{cg}}$ can realise a shift whose de~Rham
class is non-trivial:
$$
\Delta^{-1}(\eta)\cap\mathrm{Mor}(\mathcal{T}_{\mathrm{cg}})=\emptyset
\qquad\text{whenever}\quad
[\eta]\neq 0\;\text{ in }\;H^{1}_{\mathrm{dR}}(M,\mathbb{R}).
$$
(see Proposition~\ref{prop:nogo}).
However, the existence of isomorphisms connecting $\theta$-atP structures whose backgrounds differ
by a non-exact closed form is proved.
The characterization of the latter remains open.

The paper is organized as follows. Section~\ref{sec:tools} recalls the background:
the Schouten--Nijenhuis bracket, $\theta$-twisted Courant algebroids, the Dirac
characterization of $\theta$-atP structures via the graph construction, and
the definition of $\theta$-atP cohomology.
Section~\ref{sec:groupoid} defines  $\mathcal{T}(M)$, and 
establishes the classifying functor $\Delta$.
Section~\ref{sec:tfix} is devoted to  $\mathcal{T}_{\mathrm{fix}}$ and its decomposition.
Section~\ref{sec:Tmod} treats  $\mathcal{T}_{\mathrm{mod}}$.

\section{Tools}
\label{sec:tools}
This section presents some results that will be needed in the sequel.

\subsection{Notations and conventions.}
Throughout, all manifolds and maps are smooth. Let $M$ be a  manifold.
The algebra of smooth functions on $M$ is denoted by $C^{\infty}(M)$.

For $k\geq 0$, $\Omega^{k}(M)$ and $\mathfrak{X}^{k}(M)$ denote the spaces of smooth
$k$-forms and $k$-vector fields, respectively. We set
$\Omega(M)=\bigoplus_{k\geq 0}\Omega^{k}(M)$ and $\mathfrak{X}(M)=\bigoplus_{k\geq 0}\mathfrak{X}^{k}(M)$.
The tangent bundle and the cotangent bundle of $M$ are denoted by $TM$ and $T^{*}M$,
respectively.

Let $X\in \mathfrak{X}(M)$ and let $\psi\in\Omega^{k}(M)$.
We denote by $i_{X}\psi$ the interior product (contraction) of $\psi$, defined by
\begin{align*}
	(i_{X}\psi)(X_{1},\dots,X_{k-1})
	&=\psi(X,X_{1},\dots,X_{k-1})
\end{align*}
for every $X_{1},\dots,X_{k-1}\in\mathfrak{X}^{1}(M)$.
Observe that the map $i_{X}:\Omega(M)\to\Omega^{k-1}(M)$ is a
$C^{\infty}(M)$-linear endomorphism of $\Omega(M)$.

Let $\Lambda\in \mathfrak{X}^{2}(M)$ be a bivector field. It induces naturally a morphism of vector bundles
$\Lambda^\#: T^{*}M\longrightarrow TM$, and, more generally, a morphism of $C^\infty(M)$-modules
\[
\Lambda^\#:\Omega^{k}(M)\longrightarrow \mathfrak{X}^{k}(M),
\]
defined by
\begin{align*}
	\Lambda^\#(\psi)(\alpha_{1},\dots,\alpha_{k})
	&= (-1)^{k}\psi\bigl(\Lambda^\#(\alpha_{1}),\dots,\Lambda^\#(\alpha_{k})\bigr),
\end{align*}
for every $\psi\in \Omega^{k}(M)$ and $\alpha_{1},\dots,\alpha_{k}\in \Omega^{1}(M)$.
In particular, $\Lambda^\#(f)=f$ for $f\in C^\infty(M)$, and $\Lambda^\#(\alpha)$ is a vector field for every
$\alpha\in \Omega^{1}(M)$ given by
\[
\Lambda^\#(\alpha)(\beta)=\Lambda(\alpha,\beta),\qquad \forall\,\beta\in\Omega^{1}(M).
\]

A $2$-form $B\in\Omega^{2}(M)$ induces also a morphism
$B^\flat:\mathfrak{X}^{1}(M)\to \Omega^{1}(M)$ defined by
\[
B^\flat(X)=i_{X}B,\qquad \forall\,X\in \mathfrak{X}^{1}(M).
\]

The Koszul bracket $[\ ,\ ]_{K}$ associated to a bivector field $\Lambda$ is defined by
\[
[\alpha,\beta]_{K}
=\mathcal{L}_{\Lambda^\#(\alpha)}(\beta)
-\mathcal{L}_{\Lambda^\#(\beta)}(\alpha)
-d\Lambda(\alpha,\beta),
\qquad \alpha,\beta\in \Omega^{1}(M).
\]

We denote by $[P,Q]$ the Schouten--Nijenhuis bracket of multivector fields
$P,Q\in\Omega(M)$. See \cite[Theorem 2.8]{CrainicFernandesMarcut2021}
for properties of this bracket.

All bivector fields $\Lambda$ are assumed to have constant rank. This ensures that $D=Im\Lambda^\#$ is a smooth subbundle of $TM$, as required in section~\ref{sec:tfix} and \ref{sec:Tmod}.

\subsection{$\theta$-almost twisted Poisson structures and their Dirac characterization.}
\label{sec:courant_algebroid}
Let $\theta$ be a closed $1$-form on $M$.

The Lichnerowicz--de~Rham differential (see \cite[Definition 2.2]{Chantraine}) is defined by
\begin{equation*}
	d_{\theta}:\Omega(M)\longrightarrow \Omega(M),\qquad
	d_{\theta}(\alpha)=d\alpha-\theta\wedge\alpha .
\end{equation*}
Since $d\theta=0$, we have $d_{\theta}^{2}=0$.

For $X\in \mathfrak{X}^{1}(M)$, the modified Lie derivative is given by
\begin{equation*}
	\mathcal{L}^{\theta}_{X}=d_{\theta}\, i_{X}+ i_{X} d_{\theta}
	=\mathcal{L}_{X}-\theta(X).
\end{equation*}
Moreover, it satisfies the usual Cartan identities:
\begin{align}
	[\mathcal{L}^{\theta}_{X},d_{\theta}] &= 0, \label{cartan1}\\
	[\mathcal{L}^{\theta}_{X}, i_{Y}] &= i_{[X,Y]}, \label{cartan2}\\
	[\mathcal{L}^{\theta}_{X},\mathcal{L}^{\theta}_{Y}] &= \mathcal{L}^{\theta}_{[X,Y]}. \label{cartan3}
\end{align}

 We consider the vector bundle $E = TM \oplus T^*M$ endowed with the anchor
 $\rho: E\to TM$, $\rho(X,\xi)=X$,
 the nondegenerate symmetric bilinear pairing
 $	\langle (X,\xi),(Y,\eta) \rangle = i_X\eta + i_Y\xi,$

 the $\theta$-twisted Dorfman bracket defined for $e_i=(X_i,\xi_i)\in\Gamma(E)$, $i=1,2$, by
\begin{equation*}
	e_1 \circ_\theta e_2 =( [X_1,X_2]\ ,\ \mathcal{L}^\theta_{X_1} \xi_2 - i_{X_2} d_{\theta} \xi_1),
\end{equation*}
 and the $\theta$-twisted differential $D_{\theta}:C^\infty(M)\to\Gamma(E)$,
$D_{\theta}f=(0,\ d_{\theta}f).$
 We obtain the following properties.

\begin{proposition}\label{prop:Courant-theta}
 For all $e_i\in\Gamma(E)$ and $f\in C^\infty(M)$, the quadruple $(E,\rho,\langle\cdot,\cdot\rangle,\circ_\theta,D_\theta)$ satisfies de the following properties
\begin{enumerate}
\item $\rho(e_1\circ_\theta e_2) = [\rho(e_1),\rho(e_2)]$;
\item $e_1\circ_\theta e_2 + e_2\circ_\theta e_1 = D_\theta\langle e_1,e_2\rangle$;
\item $\langle e_1\circ_\theta e_2, e_3\rangle + \langle e_2, e_1\circ_\theta e_3\rangle = \mathcal{L}^\theta_{\rho(e_1)} \langle e_2,e_3\rangle$;
\item $e_1\circ_\theta (e_2\circ_\theta e_3) = (e_1\circ_\theta e_2)\circ_\theta e_3 + e_2\circ_\theta (e_1\circ_\theta e_3)$;
\item $D_\theta f \circ_\theta e_1 = 0$,\quad $e_1 \circ_\theta D_\theta f = D_\theta\langle e_1, D_\theta f\rangle$.
\end{enumerate}
We will term by \emph{$\theta$-twisted Courant algebroid } the quadruple $(E,\rho,\langle\cdot,\cdot\rangle,\circ_\theta,D_\theta)$ satisfying such properties.
\end{proposition}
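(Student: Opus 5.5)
The plan is to verify the five properties one at a time by direct computation from the definitions. The tools are the Cartan identities \eqref{cartan1}--\eqref{cartan3} for $\mathcal{L}^\theta$, the relation $\mathcal{L}^\theta_X=\mathcal{L}_X-\theta(X)$, and the formulas $\mathcal{L}^\theta_X=d_\theta i_X+i_Xd_\theta$ and $d_\theta^2=0$. Throughout, write $e_i=(X_i,\xi_i)$.

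Property 1 is immediate, since the vector component of $e_1\circ_\theta e_2$ is $[X_1,X_2]$.

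For Property 2, the form component of the symmetrized bracket is
$$\mathcal{L}^\theta_{X_1}\xi_2-i_{X_2}d_\theta\xi_1+\mathcal{L}^\theta_{X_2}\xi_1-i_{X_1}d_\theta\xi_2 .$$
Applying Cartan's formula to each $\mathcal{L}^\theta$ term reduces this to $d_\theta(i_{X_1}\xi_2+i_{X_2}\xi_1)=d_\theta\langle e_1,e_2\rangle$. The vector components cancel by antisymmetry of the Lie bracket.

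For Property 3, expand the left side using $i_{[X_1,X_j]}=[\mathcal{L}^\theta_{X_1},i_{X_j}]$, which is \eqref{cartan2}. The terms $\mathcal{L}^\theta_{X_1}(i_{X_2}\xi_3+i_{X_3}\xi_2)$ should appear, and the $d_\theta\xi_1$ terms cancel by skew-symmetry of the 2-form $d_\theta\xi_1$. One caveat needs care here: $\mathcal{L}^\theta_{X_1}$ on functions equals $X_1-\theta(X_1)$. I will check whether the identity holds literally with this operator, or whether a factor of $\theta(X_1)$ appears twice versus once. The bracket is built so that $\langle\cdot,\cdot\rangle$ transforms with weight one, and I expect this to make the identity work with the stated right-hand side. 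If there is a mismatch, I will track the extra $\theta(X_1)\langle e_2,e_3\rangle$ explicitly.

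Property 5 is short. We have $D_\theta f\circ_\theta e_1=(0,-i_{X_1}d_\theta d_\theta f)=0$ because $d_\theta^2=0$. In the other order, $e_1\circ_\theta D_\theta f=(0,\mathcal{L}^\theta_{X_1}d_\theta f)=(0,d_\theta i_{X_1}d_\theta f)$, and $i_{X_1}d_\theta f=\langle e_1,D_\theta f\rangle$.

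The main obstacle is Property 4, the Leibniz identity. I would not prove it by brute force. Instead I would use the derived-bracket description: identify $e=(X,\xi)$ with the operator $i_X+\xi\wedge$ on $\Omega(M)$. Then the bracket $e_1\circ_\theta e_2$ corresponds to the derived bracket $[[i_{X_1}+\xi_1\wedge,\,d_\theta],\,i_{X_2}+\xi_2\wedge]$. This reduces Property 4 to the graded Jacobi identity together with $d_\theta^2=0$. To carry this out, I will check that
$$[i_{X_1}+\xi_1\wedge,d_\theta]=\mathcal{L}^\theta_{X_1}+(d_\theta\xi_1)\wedge .$$
I will then compute its commutator with $i_{X_2}+\xi_2\wedge$. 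By \eqref{cartan2} and $[\mathcal{L}^\theta_{X_1},\xi_2\wedge]=(\mathcal{L}_{X_1}\xi_2)\wedge$, this should yield $i_{[X_1,X_2]}+(\mathcal{L}^\theta_{X_1}\xi_2-i_{X_2}d_\theta\xi_1)\wedge$, again up to the $\theta(X_1)$ weight caveat. Since $d_\theta$ is odd and squares to zero, the derived bracket satisfies the Leibniz rule automatically. As a fallback, I would check Property 4 separately on the vector and form components, using \eqref{cartan3} for the $\mathcal{L}^\theta\mathcal{L}^\theta$ terms and \eqref{cartan1}.
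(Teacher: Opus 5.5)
Your arguments for Properties 1, 2, 3 and 5 follow the paper's computations and are correct. In Property 3 there is no mismatch: $i_{X_3}\mathcal{L}^\theta_{X_1}\xi_2$ and $i_{X_2}\mathcal{L}^\theta_{X_1}\xi_3$ each contribute exactly one $-\theta(X_1)$ times a pairing term, which is what $\mathcal{L}^\theta_{X_1}=X_1-\theta(X_1)$ on functions requires. In Property 5 you rightly invoke $d_\theta^2=0$ for $D_\theta f\circ_\theta e_1=0$, which the paper leaves implicit.

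\textbf{The gap is in your main argument for Property 4.} The derived bracket of $d_\theta$ on $\Omega(M)$ does not reproduce $\circ_\theta$.
\begin{enumerate}
\item Since $\xi_1\wedge$ and $\theta\wedge$ are odd and anticommute, $[\xi_1\wedge,d_\theta]=[\xi_1\wedge,d]=(d\xi_1)\wedge$. So $[i_{X_1}+\xi_1\wedge,d_\theta]=\mathcal{L}^\theta_{X_1}+(d\xi_1)\wedge$, not $\mathcal{L}^\theta_{X_1}+(d_\theta\xi_1)\wedge$.
\item The extra $-\theta(X_1)$ in $\mathcal{L}^\theta_{X_1}$ is multiplication by a function, so it commutes with $i_{X_2}+\xi_2\wedge$. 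Your own correct identity $[\mathcal{L}^\theta_{X_1},\xi_2\wedge]=(\mathcal{L}_{X_1}\xi_2)\wedge$ already shows this.
\end{enumerate}
Hence
\[
[[\,i_{X_1}+\xi_1\wedge,\,d_\theta\,],\,i_{X_2}+\xi_2\wedge\,]
= i_{[X_1,X_2]}+\bigl(\mathcal{L}_{X_1}\xi_2-i_{X_2}d\xi_1\bigr)\wedge ,
\]
which is the \emph{untwisted} Dorfman bracket. Conceptually, locally $d_\theta=e^{g}\,d\,e^{-g}$ with $dg=\theta$, and conjugation by $e^{g}$ commutes with every $i_X+\xi\wedge$, so $\theta$ is invisible to this construction. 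The discrepancy is therefore not a $\theta(X_1)$ weight factor. The whole correction $-\theta(X_1)\xi_2+\theta(X_2)\xi_1-\xi_1(X_2)\theta$ that separates $\circ_\theta$ from the Dorfman bracket is lost. Graded Jacobi there yields Leibniz only for the untwisted bracket.

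\textbf{Your fallback closes the gap.} It is exactly the paper's route, and it is short. The vector parts agree by Jacobi. Sort the form components by $\xi_3,\xi_2,\xi_1$:
\begin{enumerate}
\item The $\xi_3$-terms agree by \eqref{cartan3}.
\item The $\xi_2$-terms agree because
\[
-i_{X_3}d_\theta\mathcal{L}^\theta_{X_1}\xi_2-i_{[X_1,X_3]}d_\theta\xi_2
=-\bigl(i_{X_3}\mathcal{L}^\theta_{X_1}+i_{[X_1,X_3]}\bigr)d_\theta\xi_2
=-\mathcal{L}^\theta_{X_1}i_{X_3}d_\theta\xi_2 ,
\]
by \eqref{cartan1} and \eqref{cartan2}.
\item For the $\xi_1$-terms, use $d_\theta i_{X_2}d_\theta\xi_1=\mathcal{L}^\theta_{X_2}d_\theta\xi_1$, which follows from $d_\theta^2=0$. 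Then \eqref{cartan2} turns $i_{X_3}\mathcal{L}^\theta_{X_2}d_\theta\xi_1-\mathcal{L}^\theta_{X_2}i_{X_3}d_\theta\xi_1$ into $-i_{[X_2,X_3]}d_\theta\xi_1$, the left-hand term.
\end{enumerate}

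If you prefer a conceptual proof, note that Leibniz is a local identity. On a chart with $\theta=dg$, the map $(X,\xi)\mapsto(X,e^{-g}\xi)$ intertwines $\circ_\theta$ with the untwisted Dorfman bracket, because $\mathcal{L}^\theta_X\xi=e^{g}\mathcal{L}_X(e^{-g}\xi)$ and $d_\theta\xi=e^{g}d(e^{-g}\xi)$. The untwisted Leibniz identity, which your derived-bracket argument with $d$ does prove, then transfers to $\circ_\theta$.
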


\begin{proof}
\textit{Axiom 1:} The vector component of $e_1\circ_\theta e_2$ is $[X_1,X_2]$, which is exactly equal to $[\rho(e_1),\rho(e_2)]$.

\smallskip
\textit{Axiom 2:} Using $\mathcal{L}^\theta_X = d_{\theta} i_X + i_X d_{\theta}$, the $1$-form part of
$e_1\circ_\theta e_2 + e_2\circ_\theta e_1$ equals
\begin{align*}
	\bigl(d_{\theta} i_{X_1}\xi_2 + i_{X_1}d_{\theta}\xi_2 - i_{X_2}d_{\theta}\xi_1\bigr)
	&+ \bigl(d_{\theta} i_{X_2}\xi_1 + i_{X_2}d_{\theta}\xi_1 - i_{X_1}d_{\theta}\xi_2\bigr)\\
	&= d_{\theta}\bigl(i_{X_1}\xi_2 + i_{X_2}\xi_1\bigr)\\
	&= d_{\theta}\langle e_1,e_2\rangle.
\end{align*}
The vector parts cancel since $[X_2,X_1]=-[X_1,X_2]$, hence
$e_1\circ_\theta e_2 + e_2\circ_\theta e_1 = D_\theta\langle e_1,e_2\rangle$.

\smallskip
\textit{Axiom 3:} Expanding the left-hand side and using $i_{X_3}i_{X_2}=-i_{X_2}i_{X_3}$, one obtains
$$
i_{[X_1,X_2]}\xi_3 + i_{X_3}\mathcal{L}^\theta_{X_1}\xi_2
+ i_{X_2}\mathcal{L}^\theta_{X_1}\xi_3 + i_{[X_1,X_3]}\xi_2.
$$
On the other hand, applying $\mathcal{L}^\theta_{X_1}$ to
$\langle e_2,e_3\rangle = i_{X_2}\xi_3 + i_{X_3}\xi_2$, and using \eqref{cartan2},
we obtain exactly the same expression.

\smallskip
\textit{Axiom 4:} Set $\xi_{23}:=\mathcal{L}^\theta_{X_2}\xi_3 - i_{X_3}d_{\theta}\xi_2$, so that
$e_2\circ_\theta e_3 = \bigl([X_2,X_3],\ \xi_{23}\bigr)$.
Then
\begin{equation*}
	e_1\circ_\theta (e_2\circ_\theta e_3)
	=
	\Bigl([X_1,[X_2,X_3]],\ \mathcal{L}^\theta_{X_1}\mathcal{L}^\theta_{X_2}\xi_3
	- \mathcal{L}^\theta_{X_1}i_{X_3}d_{\theta}\xi_2
	- i_{[X_2,X_3]}d_{\theta}\xi_1\Bigr).
\end{equation*}
For the right-hand side, one computes $(e_1\circ_\theta e_2)\circ_\theta e_3$ and
$e_2\circ_\theta (e_1\circ_\theta e_3)$ separately.
The vector parts give
$[[X_1,X_2],X_3] + [X_2,[X_1,X_3]] = [X_1,[X_2,X_3]]$ by the Jacobi identity.
For the $1$-form parts, identity \eqref{cartan3} implies
$\mathcal{L}^\theta_{X_1}\mathcal{L}^\theta_{X_2}
= \mathcal{L}^\theta_{X_2}\mathcal{L}^\theta_{X_1}+\mathcal{L}^\theta_{[X_1,X_2]}$,
which matches the $\xi_3$-terms.
Applying \eqref{cartan2} to rewrite
$\mathcal{L}^\theta_{X_1}i_{X_3}
= i_{X_3}\mathcal{L}^\theta_{X_1}+i_{[X_1,X_3]}$,
and using the commutation $d_{\theta}\mathcal{L}^\theta_{X_1}=\mathcal{L}^\theta_{X_1}d_{\theta}$,
which follows from \eqref{cartan1},
all remaining terms cancel identically.
Hence Axiom 4 holds.

\smallskip
\textit{Axiom 5:} Since $D_\theta f=(0,d_{\theta} f)$ has vanishing vector part, we have
$D_\theta f\circ_\theta e=0$.
For the second identity, we compute
$$
e\circ_\theta D_\theta f
= \mathcal{L}^\theta_X d_{\theta} f
= d_{\theta}\bigl(i_X d_{\theta} f\bigr)
= D_\theta\bigl(i_X d_{\theta} f\bigr)
= D_\theta\langle e,D_\theta f\rangle.
$$
This completes the proof.
\end{proof}

Let $H\in\Omega^3(M)$. We define a modified bracket by
\begin{equation*}
	e_1 \circ_{\theta,H} e_2
	=
	e_1 \circ_\theta e_2 + \bigl(0,\, i_{X_2} i_{X_1} H\bigr).
\end{equation*}

\begin{proposition}
	The bracket $\circ_{\theta,H}$ makes $(E,\rho,\langle\cdot,\cdot\rangle,D_\theta)$ a $\theta$-Courant algebroid
	if and only if
	\begin{equation}\label{closedH}
		d_{\theta} H = 0.
	\end{equation}
	When~\eqref{closedH} holds, we call $(E,\rho,\langle\cdot,\cdot\rangle,\circ_{\theta,H},D_\theta)$ an
	\emph{$H$-twisted $\theta$-Courant algebroid}.
\end{proposition}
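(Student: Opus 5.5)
Since $\circ_{\theta,H}$ differs from $\circ_\theta$ only by the $C^\infty(M)$-bilinear term $\tau(e_1,e_2):=(0,i_{X_2}i_{X_1}H)$, the plan is to check the five axioms of Proposition~\ref{prop:Courant-theta} one at a time. For each axiom we subtract the version already established for $\circ_\theta$, so that only the terms containing $\tau$ remain to be examined.

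\emph{Axioms 1, 2, 3 and 5.} Axiom 1 is immediate, because $\tau$ has no vector component. For Axiom 2, $\tau$ is skew-symmetric, since $i_{X_2}i_{X_1}H=-i_{X_1}i_{X_2}H$, so it drops out of the symmetrization. For Axiom 3, the extra contribution is
$$\langle\tau(e_1,e_2),e_3\rangle+\langle e_2,\tau(e_1,e_3)\rangle=H(X_1,X_2,X_3)+H(X_1,X_3,X_2)=0,$$
by total antisymmetry of $H$. For Axiom 5, $D_\theta f$ has zero vector part, so $\tau(D_\theta f,e)=0$ and $\tau(e,D_\theta f)=0$. Hence Axioms 1, 2, 3 and 5 hold for every $H\in\Omega^3(M)$, with no condition needed.

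\emph{Axiom 4 (Jacobi--Leibniz).} This is the only axiom that can impose a condition. Expanding
$e_1\circ_{\theta,H}(e_2\circ_{\theta,H}e_3)-(e_1\circ_{\theta,H}e_2)\circ_{\theta,H}e_3-e_2\circ_{\theta,H}(e_1\circ_{\theta,H}e_3)$
and using Axiom 4 for $\circ_\theta$, the purely $\circ_\theta$ terms cancel. Only terms linear in $H$ survive; there are no quadratic terms, because $\tau$ produces pure forms and $\tau$ vanishes on pure forms. The surviving terms are
$$\mathcal{L}^\theta_{X_1}(i_{X_3}i_{X_2}H)-i_{X_3}i_{[X_1,X_2]}H-i_{[X_1,X_3]}i_{X_2}H-i_{X_3}d_\theta(i_{X_2}i_{X_1}H)+i_{X_3}\,d_\theta(i_{X_1}\cdots)\ldots$$
More precisely, they are the $\mathcal{L}^\theta$ and $i\,d_\theta$ terms that $\circ_\theta$ applies to the form components $i_{X_j}i_{X_i}H$, together with the $\tau$-terms evaluated on the bracketed vector parts. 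We then commute $\mathcal{L}^\theta_{X_1}$ past the contractions using \eqref{cartan2}, and rewrite each $i_{X}d_\theta$ via $\mathcal{L}^\theta_X=d_\theta i_X+i_Xd_\theta$. The expected result is that everything collapses to
$$\pm\, i_{X_3}i_{X_2}i_{X_1}(d_\theta H),$$
which is the $\theta$-twisted analogue of the standard Ševera–Weinstein computation, with $d$ replaced by $d_\theta$ and $\mathcal{L}$ by $\mathcal{L}^\theta$. This step is the main obstacle, and the careful bookkeeping of signs and of the $\theta(X)$ corrections happens here. To control it, I would organize the computation as the identity
$$i_{X_3}i_{X_2}i_{X_1}d_\theta H=\text{(Cartan-type expansion of } d_\theta H \text{ on three vector fields)},$$
obtained by iterating $i_Xd_\theta=\mathcal{L}^\theta_X-d_\theta i_X$ together with \eqref{cartan2}. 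The formal Cartan calculus for $(d_\theta,\mathcal{L}^\theta,i)$ is identical to the untwisted case by \eqref{cartan1}--\eqref{cartan3}, so the classical computation transfers verbatim.

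\emph{Conclusion.} The Jacobi defect is therefore $C^\infty(M)$-trilinear and equals $(0,\pm i_{X_3}i_{X_2}i_{X_1}d_\theta H)$. If $d_\theta H=0$, all axioms hold. Conversely, if Axiom 4 holds for all sections, then $d_\theta H(X_1,X_2,X_3,Y)=0$ for all vector fields $X_1,X_2,X_3,Y$ (take $e_i=(X_i,0)$), so $d_\theta H=0$, which is \eqref{closedH}.
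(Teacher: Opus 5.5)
Your proposal follows the same route as the paper. The extra term leaves Axioms 1, 2, 3 and 5 untouched; your Axiom 3 check via total antisymmetry of $H$ is in fact more precise than the paper's one-line justification. The Leibniz defect is then reduced by the twisted Cartan identities to $d_\theta H$, exactly as the paper obtains $i_{X_0}R=-(d_\theta H)(X_0,X_1,X_2,X_3)$.

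When carried out, the reduction gives $R=i_{X_3}i_{X_2}i_{X_1}d_\theta H$ with a plus sign. In your displayed partial list, the term $i_{X_3}d_\theta(i_{X_2}i_{X_1}H)$ should enter with a plus sign, and the terms $i_{[X_2,X_3]}i_{X_1}H-\mathcal{L}^\theta_{X_2}(i_{X_3}i_{X_1}H)$ are missing. Your verbal description of the surviving terms is nevertheless the correct one.
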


\begin{proof}
	We set $h(e_1,e_2):=\bigl(0, i_{X_2} i_{X_1} H\bigr)$. Then
	$e_1\circ_{\theta,H}e_2 = e_1\circ_\theta e_2 + h(e_1,e_2)$.
	Since $h$ is antisymmetric in $X_1,X_2$ and has no vector component, Axioms 1, 2, 3, and 5
	hold unchanged.
	
	It remains to verify Axiom 4 (the Leibniz rule). A direct expansion gives
	$$
	e_1\circ_{\theta,H}(e_2\circ_{\theta,H}e_3)
	=
	e_1\circ_\theta(e_2\circ_\theta e_3)
	+
	\Bigl(0,\,\mathcal{L}^\theta_{X_1} i_{X_3} i_{X_2} H + i_{[X_2,X_3]} i_{X_1} H\Bigr).
	$$
	Similarly, after collecting all terms in
	$(e_1\circ_{\theta,H}e_2)\circ_{\theta,H}e_3 + e_2\circ_{\theta,H}(e_1\circ_{\theta,H}e_3)$
	and using Axiom 4 for $\circ_\theta$, the discrepancy between the two sides is
	$(0,R)$, where
	\begin{align*}
		R
		&=
		\mathcal{L}^\theta_{X_1} i_{X_3}i_{X_2}H + i_{[X_2,X_3]} i_{X_1} H \\
		&\quad
		-\Bigl(
		-i_{X_3}d_{\theta} i_{X_2}i_{X_1}H
		+ i_{X_3} i_{[X_1,X_2]} H
		+ \mathcal{L}^\theta_{X_2} i_{X_3}i_{X_1}H
		+ i_{[X_1,X_3]} i_{X_2} H
		\Bigr).
	\end{align*}
	
	Let $X_0$ be any vector field on $M$. According to
	$i_{X_0}\mathcal{L}^\theta_X = \mathcal{L}^\theta_X i_{X_0} - i_{[X,X_0]}$
	and
	$i_{X_0}d_{\theta} = \mathcal{L}^\theta_{X_0}-d_{\theta} i_{X_0}$,
	together with the antisymmetry of interior products, we obtain
	\begin{align*}
		i_{X_0}R &= -(d_{\theta} H)(X_0,X_1,X_2,X_3).
	\end{align*}
	Hence, $R=0$ if and only if $d_{\theta} H=0$. This proves the Proposition.
\end{proof}

\begin{definition}
	A Dirac structure in $(E,\circ_{\theta,H})$ is a sub-bundle $L\subset E$ such that
	\begin{enumerate}
		\item[(i)]
		$L$ is isotropic and of maximal rank: $\langle L,L\rangle=0$ and $\mathrm{rank}\,L=\tfrac{1}{2}\mathrm{rank}\,E$;
		\item[(ii)]
		$\Gamma(L)$ is closed under $\circ_{\theta,H}$.
	\end{enumerate}
\end{definition}

Given a bivector field $\Lambda\in\mathfrak{X}^1(M)$, we define the graph of $\Lambda$ by
$$
L_\Lambda
=
\bigl\{(\Lambda^\#\xi,\xi)\ \big|\ \xi\in T^*M\bigr\}
\subset E.
$$
The sub-bundle $L_\Lambda$ is always isotropic and of maximal rank, since
\begin{align*}
	\langle (\Lambda^\#\xi,\xi),(\Lambda^\#\eta,\eta) \rangle
	&=
	\Lambda(\xi,\eta)+\Lambda(\eta,\xi)
	=
	0.
\end{align*}

\begin{proposition}\label{dirac}
	Let $\theta$ be a closed $1$-form and $H$ a $3$-form such that $d_{\theta}H=0$.
	Assume that $\Lambda^\#(\theta)=0$.
	Then $L_\Lambda$ is a Dirac structure for $\circ_{\theta,H}$ if and only if
	\begin{equation}\label{twistedPoisson}
		\tfrac{1}{2}[\Lambda,\Lambda] = \Lambda^\#(H).
	\end{equation}
\end{proposition}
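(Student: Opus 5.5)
Since $L_\Lambda$ is already isotropic of maximal rank, the proof reduces to showing that $\Gamma(L_\Lambda)$ is closed under $\circ_{\theta,H}$ exactly when \eqref{twistedPoisson} holds. The standard device is the \emph{Courant tensor}
$$
T_L(e_1,e_2,e_3)=\langle e_1\circ_{\theta,H}e_2,\,e_3\rangle,\qquad e_i\in\Gamma(L_\Lambda).
$$
First check that $T_L$ is $C^\infty(M)$-trilinear and totally skew on $\Gamma(L_\Lambda)$. Skewness in $(e_1,e_2)$ follows from Axiom 2 of Proposition~\ref{prop:Courant-theta}: the defect $D_\theta\langle e_1,e_2\rangle$ vanishes on $L$ by isotropy. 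Skewness in $(e_2,e_3)$ follows from Axiom 3: the right-hand side $\mathcal{L}^\theta_{\rho(e_1)}\langle e_2,e_3\rangle$ is zero by isotropy. Tensoriality follows from these symmetries together with the Leibniz-type behaviour of $\circ_\theta$ in its second slot. The only point needing care here is the $-\theta(X)$ term in $\mathcal{L}^\theta$: it produces extra terms of the form $f\,\theta(X_1)\xi_2$, and these pair to zero against $e_3$ by isotropy. Because $L_\Lambda$ is maximal isotropic, $L_\Lambda^\perp=L_\Lambda$, so closedness of $\Gamma(L_\Lambda)$ is equivalent to $T_L\equiv 0$.

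Next, compute $T_L$ on $e_i=(\Lambda^\#\xi_i,\xi_i)$; by tensoriality it suffices to take $\xi_i$ exact, $\xi_i=df_i$ locally. Set $X_i=\Lambda^\#df_i$. The $1$-form part of $e_1\circ_{\theta,H}e_2$ is
$$
\mathcal{L}_{X_1}df_2-\theta(X_1)df_2-i_{X_2}(-\theta\wedge df_1)+i_{X_2}i_{X_1}H.
$$
The hypothesis $\Lambda^\#(\theta)=0$ gives $\theta(X_i)=-\Lambda(df_i,\theta)=0$, so all $\theta$-terms drop out. The bracket therefore reduces to the untwisted $H$-Dorfman bracket on the graph, namely $[df_1,df_2]_K$ plus the $H$-correction. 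The classical Ševera–Weinstein computation then gives
$$
T_L(e_1,e_2,e_3)=\pm\Bigl(\tfrac12[\Lambda,\Lambda]-\Lambda^\#(H)\Bigr)(df_1,df_2,df_3).
$$
This comes from the cyclic sum of $\{f_1,\{f_2,f_3\}\}$, which is the Jacobiator $\tfrac12[\Lambda,\Lambda]$ up to sign, together with $H(X_1,X_2,X_3)$, which is $\mp\Lambda^\#(H)(df_1,df_2,df_3)$ under the sign convention for $\Lambda^\#$ on $3$-forms. The vector part $[X_1,X_2]$ pairs with $df_3$ and contributes $X_3$-type terms that combine with these.

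Since the $df_i$ span $T^*M$ pointwise, $T_L\equiv0$ if and only if \eqref{twistedPoisson} holds. The hypothesis $d_\theta H=0$ is used only to guarantee, via the preceding proposition, that $\circ_{\theta,H}$ is a genuine $\theta$-Courant bracket, which the tensoriality argument relies on. I expect the main obstacle to be the sign bookkeeping in this last step: matching the paper's convention $\Lambda^\#(\psi)(\alpha_1,\dots)=(-1)^k\psi(\Lambda^\#\alpha_1,\dots)$ with the normalization of $\tfrac12[\Lambda,\Lambda]$, so that the combination appearing really is $\tfrac12[\Lambda,\Lambda]-\Lambda^\#(H)$ and not a sum. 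I would fix the signs by testing on a simple example, such as a constant $\Lambda$ with constant $H$ on $\mathbb{R}^3$.
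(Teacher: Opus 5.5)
Your proof is correct and follows a genuinely different route from the paper. The paper never forms the Courant tensor. It writes closedness of $\Gamma(L_\Lambda)$ directly as the vector identity $[X,Y]=\Lambda^\#\bigl(\mathcal{L}^\theta_X\eta-i_Yd_\theta\xi+i_Yi_XH\bigr)$ with $X=\Lambda^\#\xi$, $Y=\Lambda^\#\eta$. It then subtracts the vector-valued Schouten--Nijenhuis identity \eqref{SN} for arbitrary $1$-forms and uses $\Lambda^\#\theta=0$ to kill the $\theta$-terms. That is shorter and needs no tensoriality step, but it rests on the full vector-valued identity.

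Your route works as follows. Isotropy together with Axioms 2 and 3 of Proposition~\ref{prop:Courant-theta} makes $T_L$ totally skew. Linearity in the third slot is automatic, so total skewness gives tensoriality. Then $L_\Lambda^\perp=L_\Lambda$ turns closedness into $T_L\equiv 0$. This costs the tensoriality check, but it reduces everything to exact forms, where only the Jacobiator of $\{f,g\}=\Lambda(df,dg)$ and the scalar $H(X_1,X_2,X_3)$ occur. It also makes transparent why $\theta$ plays no role.

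There are two loose ends.

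First, the $\theta$-terms do not all vanish in the bracket itself: $-i_{X_2}(-\theta\wedge df_1)$ leaves $\Lambda(df_1,df_2)\,\theta$. This is harmless because $(0,\theta)=(\Lambda^\#\theta,\theta)\in\Gamma(L_\Lambda)$, so it contributes $\Lambda(df_1,df_2)\,\theta(X_3)=0$ to $T_L$.

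Second, the sign is fixed by the paper's conventions, not by a test case. Your proposed test could not detect it anyway: for constant $\Lambda$ both $[\Lambda,\Lambda]$ and the Jacobiator vanish. The direct computation goes as follows.
\begin{itemize}
\item After the $\theta$-terms drop out, $T_L(e_1,e_2,e_3)=[X_1,X_2]f_3+X_3\{f_1,f_2\}+H(X_1,X_2,X_3)$.
\item The first two terms give the cyclic sum $\{f_1,\{f_2,f_3\}\}+\{f_2,\{f_3,f_1\}\}+\{f_3,\{f_1,f_2\}\}$.
\item Evaluating \eqref{SN} at $(df_1,df_2,df_3)$ identifies this sum with $\tfrac12[\Lambda,\Lambda](df_1,df_2,df_3)$.
\item The $(-1)^k$ convention for $\Lambda^\#$ gives $H(X_1,X_2,X_3)=-\Lambda^\#(H)(df_1,df_2,df_3)$.
\end{itemize}
Hence $T_L=\bigl(\tfrac12[\Lambda,\Lambda]-\Lambda^\#(H)\bigr)(df_1,df_2,df_3)$ with a plus sign, as you guessed. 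Under the opposite sign convention for the Schouten bracket, the statement itself would read $\tfrac12[\Lambda,\Lambda]=-\Lambda^\#(H)$. So this normalization is an input, fixed in the paper by \eqref{SN}, and cannot be derived.

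Finally, $d_\theta H=0$ is never used, contrary to your remark. Axioms 2 and 3 hold for $\circ_{\theta,H}$ with any $H$, and the paper's proof does not use this hypothesis either.
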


\begin{proof}
Let $\xi,\eta\in\Omega^1(M)$. We set $X=\Lambda^\#\xi$, $Y=\Lambda^\#\eta$. The
condition that $\Gamma(L_\Lambda)$ is closed under $\circ_{\theta,H}$ reads
\begin{equation}\label{diracCond}
  [X,Y] = \Lambda^\#\!\bigl(\mathcal{L}^\theta_X\eta - i_Yd_{\theta}\xi + i_Yi_XH\bigr).
\end{equation}
Using $\mathcal{L}^\theta_X = L_X - \theta(X)$ and
$d_{\theta}\xi = d\xi - \theta\wedge\xi$,  the right-hand side of \eqref{diracCond} gives
$$
  \Lambda^\#\!\bigl(\mathcal{L}^\theta_X\eta - i_Yd_{\theta}\xi + i_Yi_XH\bigr)
  =\Lambda^\#\!\bigl( L_X\eta - i_Y\, d\xi\bigr) - \Lambda^\#\!\bigl(\theta(X)\eta + \theta(Y)\xi
    - \xi(Y)\theta + i_Yi_XH\bigr).
$$
The classical Schouten--Nijenhuis identity (see, e.g.,~\cite[\S 2.2]{CrainicFernandesMarcut2021}) gives
\begin{equation}\label{SN}
  [X,Y] - \Lambda^\#(L_X\eta - i_Y\,d\xi)
  = \tfrac{1}{2}[\Lambda,\Lambda](\xi,\eta, ).
\end{equation}
Therefore, condition~\eqref{diracCond} is equivalent to, for every $1$-form $\zeta$,
\begin{align*}
  \tfrac{1}{2}[\Lambda,\Lambda](\xi,\eta,\zeta)
  &= -H(X,Y,\Lambda^\#\zeta)
    - \theta(X)\Lambda(\eta,\zeta)
    + \theta(Y)\Lambda(\xi,\zeta)\\
    &- \Lambda(\xi,\eta)\,\Lambda(\theta,\zeta).
\end{align*}

 Using the fact that $\Lambda^\#(\theta)=0$, we get

$$
  \tfrac{1}{2}[\Lambda,\Lambda](\xi,\eta,\zeta)
  = \Lambda^\#(H)(\xi,\eta,\zeta).
$$
 Since this must hold for all $\xi,\eta,\zeta$, it is equivalent to~\eqref{twistedPoisson}.

\end{proof}

\begin{definition}[\cite{NansidiTangueDongho2025b}]\label{def_theta_alP}
  A \emph{$\theta$-almost twisted Poisson ($\theta$-atP) manifold} is a manifold together with a triple
  $(\Lambda,H,\theta)$ where
  $\Lambda\in\mathfrak{X}^{2}(M)$,
  $H\in\Omega^{3}(M)$,
  $\theta\in\Omega^{1}(M)$, satisfying: $d\theta = 0$, $\Lambda^{\#}(\theta) = 0$,  $\tfrac{1}{2}[\Lambda,\Lambda] = \Lambda^{\#}(H)$, and $dH = \theta\wedge H$.
\end{definition}

\subsection{$\theta$-almost twisted Poisson cohomology}
\label{subsec:cohomology}
Let   $(\Lambda,H,\theta)$ be a  $\theta$-almost twisted Poisson structure  on $M$.
The module $\Omega^{1}(M)$ carries the \emph{$(H,\theta)$-twisted
Koszul bracket}
\begin{equation*}
  [\alpha,\beta]_{H,\theta}
  = [\alpha,\beta]_{K}
    + i_{\Lambda^{\#}(\beta)}\,i_{\Lambda^{\#}(\alpha)}\,H
    + \Lambda(\alpha,\beta)\cdot\theta,
\end{equation*}
and the triple
$\mathcal{A}(\Lambda,H,\theta)
 =(\Omega^{1}(M),[\cdot,\cdot]_{H,\theta},\Lambda^{\#})$
is a Lie algebroid over $M$ (see \cite{NansidiTangueDongho2025b}).\\

The Chevalley--Eilenberg complex of $\mathcal{A}(\Lambda,H,\theta)$
is $(\mathfrak{X}^{*}(M),\partial_{H,\theta})$, where
$\partial_{H,\theta}^{n}:\mathfrak{X}^{n}(M)\to\mathfrak{X}^{n+1}(M)$ is defined by
\begin{align}
  \partial_{H,\theta}^{n}(v)(\alpha_1,\ldots,\alpha_{n+1})
  &= \sum_{i=1}^{n+1}(-1)^{i-1}
     \Lambda^{\#}(\alpha_i)\bigl(v(\alpha_1,\ldots,\widehat{\alpha_i},\ldots,\alpha_{n+1})\bigr)
  \nonumber\\
  &\quad
  + \!\!\sum_{1\le i<j\le n+1}\!\!(-1)^{i+j}
    v\bigl([\alpha_i,\alpha_j]_{H,\theta},\alpha_1,
           \ldots,\widehat{\alpha_i},\ldots,\widehat{\alpha_j},\ldots,\alpha_{n+1}\bigr).
  \label{eq:opcobord}
\end{align}
The $\theta$-almost twisted Poisson cohomology is given as
$H^{n}_{\theta\text{-atP}}(M) = \dfrac{Ker\partial_{H,\theta}^{n}}{Img\partial_{H,\theta}^{n-1}}$.\\
The  identity $ \partial_{H,\theta}\circ\Lambda^{\#} = -\Lambda^{\#}\circ d$
yields a cohomology morphism\\
$$\Lambda^{\#}: H^{*}_{\mathrm{dR}}(M,\mathbb{R})\to H^{*}_{\theta\text{-atP}}(M,\Lambda,H,\theta),$$
which is an isomorphism when $\Lambda$ is nondegenerate.

\section{The groupoid of $(H,\theta)$-twisted Lie algebroids.}\label{sec:groupoid}
 We denote by $\mathcal{S}(M)$  the set of all $\theta$-atP structures on $M$. We first  specify the morphisms under consideration. 

\begin{definition}\label{def:morphism}
Let $(\Lambda, H, \theta)$ and $(\Lambda', H', \theta')$ be two $\theta$-atP
structures on $M$. An isomorphism of $(H,\theta)$-twisted Lie
algebroids from $A(\Lambda, H, \theta)$ to $A(\Lambda', H', \theta')$ is a
$C^\infty(M)$-linear isomorphism $\Phi : \Omega^1(M) \to \Omega^1(M)$ satisfying
\begin{equation}\label{eq:morphism-bis}
  (\Lambda')^\# \circ \Phi = \Lambda^\#
  \qquad\text{and}\qquad
  \Phi\bigl([\alpha,\beta]_{H,\theta}\bigr)
  = [\Phi\alpha,\Phi\beta]_{H',\theta'}
  \quad \forall\;\alpha,\beta \in \Omega^1(M).
\end{equation}
We call such  a $\Phi$  a morphism from $(\Lambda,H,\theta)$ to
$(\Lambda',H',\theta')$  in $\mathcal{S}(M)$, and use the notation
$\Phi : (\Lambda,H,\theta) \to (\Lambda',H',\theta')$.
\end{definition}

\begin{proposition}\label{thm:groupoid}
	The collection $\mathcal{T}(M)$, whose objects are the $\theta$-atP structures $\mathcal{S}(M)$,
	and whose morphisms from $(\Lambda,H,\theta)$ to $(\Lambda',H',\theta')$
	are the isomorphisms of $(H,\theta)$-twisted Lie algebroids in the sense of
	Definition~\ref{def:morphism},
	is a groupoid under composition of maps.
\end{proposition}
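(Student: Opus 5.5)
We verify the groupoid axioms directly from Definition~\ref{def:morphism}: identities, closure under composition (with associativity inherited from composition of maps), and existence of inverses that are again morphisms. Source and target maps send $\Phi:(\Lambda,H,\theta)\to(\Lambda',H',\theta')$ to $(\Lambda,H,\theta)$ and $(\Lambda',H',\theta')$ respectively. One point to settle first: a morphism is a map $\Phi$ together with its declared source and target. The same underlying map might intertwine several pairs of structures, so we treat morphisms as triples (source, $\Phi$, target), as is standard.

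\emph{Identities.} For each $(\Lambda,H,\theta)\in\mathcal{S}(M)$, the map $\mathrm{id}_{\Omega^1(M)}$ is $C^\infty(M)$-linear and invertible. It satisfies $\Lambda^\#\circ\mathrm{id}=\Lambda^\#$ and preserves $[\cdot,\cdot]_{H,\theta}$ trivially, so it is a morphism $(\Lambda,H,\theta)\to(\Lambda,H,\theta)$. It is a two-sided unit for composition.

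\emph{Composition.} Let $\Phi:(\Lambda,H,\theta)\to(\Lambda',H',\theta')$ and $\Phi':(\Lambda',H',\theta')\to(\Lambda'',H'',\theta'')$. Then $\Phi'\circ\Phi$ is a $C^\infty(M)$-linear isomorphism. The anchor condition follows from
\[
(\Lambda'')^\#\circ\Phi'\circ\Phi=(\Lambda')^\#\circ\Phi=\Lambda^\#.
\]
The bracket condition follows by chaining:
\[
\Phi'\Phi[\alpha,\beta]_{H,\theta}=\Phi'[\Phi\alpha,\Phi\beta]_{H',\theta'}=[\Phi'\Phi\alpha,\Phi'\Phi\beta]_{H'',\theta''}.
\]
Associativity holds because composition of maps is associative.

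\emph{Inverses.} This is the only step with any content, and even it is mild. Given $\Phi$, the inverse $\Phi^{-1}$ is $C^\infty(M)$-linear, since the inverse of a module isomorphism is a module map. For the anchor condition, compose $(\Lambda')^\#\circ\Phi=\Lambda^\#$ on the right with $\Phi^{-1}$ to get $\Lambda^\#\circ\Phi^{-1}=(\Lambda')^\#$. For the bracket condition, take $\alpha',\beta'$ and set $\alpha=\Phi^{-1}\alpha'$, $\beta=\Phi^{-1}\beta'$. The bracket identity for $\Phi$ gives $\Phi[\alpha,\beta]_{H,\theta}=[\alpha',\beta']_{H',\theta'}$. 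Applying $\Phi^{-1}$ yields
\[
\Phi^{-1}[\alpha',\beta']_{H',\theta'}=[\Phi^{-1}\alpha',\Phi^{-1}\beta']_{H,\theta}.
\]
Hence $\Phi^{-1}:(\Lambda',H',\theta')\to(\Lambda,H,\theta)$ is a morphism, with $\Phi\circ\Phi^{-1}$ and $\Phi^{-1}\circ\Phi$ equal to the respective identities.

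No use of the $\theta$-atP axioms, such as the Jacobi identity or $dH=\theta\wedge H$, is needed. The objects being Lie algebroids (per \cite{NansidiTangueDongho2025b}) only guarantees that $\mathcal{T}(M)$ is a groupoid of Lie algebroid isomorphisms; the groupoid axioms themselves are purely formal.
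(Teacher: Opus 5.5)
Your proof is correct and follows the paper's argument essentially verbatim: closure under composition by chaining the anchor and bracket conditions, the identity map as unit, and inverses obtained by substituting $\alpha=\Phi^{-1}\alpha'$, $\beta=\Phi^{-1}\beta'$ into the bracket condition. Your convention of treating morphisms as triples (source, $\Phi$, target) is a worthwhile refinement that the paper leaves implicit. It is exactly what makes $\Delta$ well defined, since the paper itself later uses $\mathrm{id}$ as a morphism from $(0,0,0)$ to $(0,0,\eta)$.
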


\begin{proof}
	We verify the groupoid axioms.\smallskip
	
	\emph{Composition:}
	Let
	$\Phi_1 : (\Lambda,H,\theta) \to (\Lambda',H',\theta')$
	and
	$\Phi_2 : (\Lambda',H',\theta') \to (\Lambda'',H'',\theta'')$
	be two morphisms.
	Using \eqref{eq:morphism-bis}, we obtain
\begin{align*}
   (\Lambda'')^\# \circ (\Phi_2 \circ \Phi_1)
   &= \bigl((\Lambda'')^\# \circ \Phi_2\bigr) \circ \Phi_1
   = (\Lambda')^\# \circ \Phi_1 = \Lambda^\#,
   \end{align*}
   and
   \begin{align*}
   (\Phi_2 \circ \Phi_1)\bigl([\alpha,\beta]_{H,\theta}\bigr)
   &= \Phi_2\bigl([\Phi_1\alpha, \Phi_1\beta]_{H',\theta'}\bigr)
   = [(\Phi_2\circ\Phi_1)\alpha,\, (\Phi_2\circ\Phi_1)\beta]_{H'',\theta''}.
\end{align*}

Hence $\Phi_2\circ\Phi_1$ satisfies both conditions of \eqref{eq:morphism-bis}
for the pair
$((\Lambda,H,\theta),(\Lambda'',H'',\theta''))$.

\emph{Identities:}
The identity map $id_{\Omega^1(M)}$ satisfies both conditions with
$(\Lambda',H',\theta')=(\Lambda,H,\theta)$.

\emph{Inverses:}
From $(\Lambda')^\#\circ\Phi=\Lambda^\#$, we obtain
$(\Lambda')^\#=\Lambda^\#\circ\Phi^{-1}$.
Setting $\alpha'=\Phi\alpha$ and $\beta'=\Phi\beta$ in the bracket condition yields
$\Phi^{-1}\bigl([\alpha',\beta']_{H',\theta'}\bigr)
=
[\Phi^{-1}\alpha',\Phi^{-1}\beta']_{H,\theta}$.
Therefore $\Phi^{-1}$ is a morphism from $(\Lambda',H',\theta')$ to $(\Lambda,H,\theta)$.
\end{proof}

\begin{corollary}\label{cor:cohom}
Every morphism $\Phi \in Mor(\mathcal{T}(M))$ induces a natural isomorphism
$$
   H^{*}_{\theta\text{-atP}}(M,\Lambda,H,\theta)
  \xrightarrow{\;\sim\;}
   H^{*}_{\theta\text{-atP}}(M,\Lambda',H',\theta').
$$
\end{corollary}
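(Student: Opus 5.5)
The plan is to transport cochains along the dual of $\Phi$ and check that this transport commutes with the Chevalley--Eilenberg differentials~\eqref{eq:opcobord}. Since $\Phi:\Omega^1(M)\to\Omega^1(M)$ is a $C^\infty(M)$-linear isomorphism, it defines a bundle automorphism of $T^*M$. For $v\in\mathfrak{X}^n(M)$, viewed as a $C^\infty(M)$-multilinear alternating form on $\Omega^1(M)$, I would set
\[
(\Phi_*v)(\alpha'_1,\ldots,\alpha'_n)=v\bigl(\Phi^{-1}\alpha'_1,\ldots,\Phi^{-1}\alpha'_n\bigr),
\qquad \alpha'_i\in\Omega^1(M).
\]
Because $\Phi^{-1}$ is $C^\infty(M)$-linear, $\Phi_*v$ is again $C^\infty(M)$-multilinear and alternating, so it lies in $\mathfrak{X}^n(M)$. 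Clearly $\Phi_*$ is a graded isomorphism with inverse $(\Phi^{-1})_*$, and $(\Phi_2\circ\Phi_1)_*=(\Phi_2)_*\circ(\Phi_1)_*$.

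The key step is the chain-map identity
\[
\partial_{H',\theta'}\circ\Phi_*=\Phi_*\circ\partial_{H,\theta}.
\]
To prove it, evaluate $\partial_{H',\theta'}(\Phi_*v)$ on $\alpha'_i=\Phi\alpha_i$ and treat the two sums in~\eqref{eq:opcobord} separately.

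In the anchor terms we get $(\Lambda')^\#(\Phi\alpha_i)$. The anchor condition in~\eqref{eq:morphism-bis} turns this into $\Lambda^\#(\alpha_i)$, and the function it acts on is $v(\alpha_1,\ldots,\widehat{\alpha_i},\ldots)$.

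In the bracket terms we get $[\Phi\alpha_i,\Phi\alpha_j]_{H',\theta'}$. The bracket condition rewrites this as $\Phi[\alpha_i,\alpha_j]_{H,\theta}$, so that $\Phi^{-1}$ applied to it returns $[\alpha_i,\alpha_j]_{H,\theta}$.

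Term by term, one therefore recovers $\partial_{H,\theta}v(\alpha_1,\ldots,\alpha_{n+1})$. Since $\Phi$ is surjective, every tuple $(\alpha'_i)$ has the form $(\Phi\alpha_i)$, so the identity holds everywhere.

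It follows that $\Phi_*$ maps $\ker\partial_{H,\theta}^n$ onto $\ker\partial_{H',\theta'}^n$ and $\mathrm{Im}\,\partial^{n-1}_{H,\theta}$ onto $\mathrm{Im}\,\partial^{n-1}_{H',\theta'}$. Hence it descends to a map in cohomology. The inverse in cohomology is induced by $(\Phi^{-1})_*$, which is a cochain map by the same argument, since $\Phi^{-1}$ is a morphism by Proposition~\ref{thm:groupoid}.

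Naturality means functoriality with respect to composition in $\mathcal{T}(M)$, and this follows from $(\Phi_2\circ\Phi_1)_*=(\Phi_2)_*\circ(\Phi_1)_*$ together with $(\mathrm{id})_*=\mathrm{id}$.

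No step is a real obstacle. The only point requiring care is that $\Phi_*v$ is genuinely a multivector field, that is, $C^\infty(M)$-linearity rather than mere $\mathbb{R}$-linearity. This is exactly where $C^\infty(M)$-linearity of $\Phi$ (hence of $\Phi^{-1}$) is used.
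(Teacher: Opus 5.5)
Your proof is correct and follows the paper's argument: both transport cochains along $\Phi$ and verify the chain-map identity term by term, using the anchor condition and the bracket condition. The only difference is direction. The paper uses the pullback $(\Phi P)(\alpha_1,\ldots,\alpha_k)=P(\Phi\alpha_1,\ldots,\Phi\alpha_k)$, which is a chain isomorphism from the target complex to the source complex and must be inverted on cohomology. Your $\Phi_*=(\Phi^{-1})^*$ is exactly that inverse, so it lands directly in the direction of the statement.
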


\begin{proof}
Extend $\Phi$ to multivectors by
$(\Phi P)(\alpha_1,\ldots,\alpha_k) := P(\Phi\alpha_1,\ldots,\Phi\alpha_k)$.
Using \eqref{eq:opcobord} and \eqref{eq:morphism-bis},
a direct computation yields
$$
\partial_{H,\theta}\circ\Phi = \Phi\circ\partial_{H',\theta'}.
$$
Since $\Phi$ is an isomorphism on $\Omega^1(M)$, the extended map is a chain
isomorphism and therefore descends to an isomorphism on cohomology.
\end{proof}
 A natural invariant of a morphism is the difference $\theta'-\theta$, which depends only on the source and target structures.  We denote by $Z^1_{\mathrm{dR}}(M)$ the set of closed  1-forms; that is
 $Z^1_{\mathrm{dR}}(M)=\{ \alpha\in\Omega^1(M) \ |\  d\alpha=0\} $

\begin{definition}\label{def:Delta}
	The \emph{classifying functor} of $\mathcal{T}(M)$ is the map
	$$
	\Delta :  \mathrm{Mor}\bigl(\mathcal{T}(M)\bigr)
	\longrightarrow Z^1_{\mathrm{dR}}(M), \qquad
	\Delta(\Phi) = \theta' - \theta,
	$$
	where $\Phi : (\Lambda, H, \theta) \to (\Lambda', H', \theta')$.
\end{definition}

The following result comes directly from Definition~\ref{def:Delta}.
\begin{property}\label{prop:Delta-functeur}
	The map $\Delta$ is a functor from $\mathcal{T}(M)$ to the abelian group
	$(Z^1_{\mathrm{dR}}(M), +)$, viewed as a groupoid with a single object.
	More precisely,
	\begin{enumerate}
		\item[(i)] $\Delta\bigl(\mathrm{Id}_{(\Lambda,H,\theta)}\bigr) = 0$;
		\item[(ii)] $\Delta(\Phi_2 \circ \Phi_1) = \Delta(\Phi_2) + \Delta(\Phi_1)$
	for any composable pair $\Phi_1,\Phi_2$;
		\item[(iii)] $\Delta(\Phi^{-1}) = -\Delta(\Phi)$.
	\end{enumerate}
\end{property}

\begin{remark}
	The map $\Delta$ depends only on the one-forms $\theta$ and $\theta'$,
	and not on $\Lambda$, $H$, or the map $\Phi$ itself. It is
	the most elementary invariant attached to a morphism.
	We set
	$$
	\mathcal{T}_{\mathrm{fix}} = \ker\Delta
	= \bigl\{\Phi \in \mathrm{Mor}(\mathcal{T}(M)) : \theta' = \theta\bigr\},
	\qquad
	\mathcal{T}_{\mathrm{mod}} = \mathrm{Mor}(\mathcal{T}(M)) \setminus \mathcal{T}_{\mathrm{fix}}.
	$$
	It follows that $\mathcal{T}_{\mathrm{fix}}=\ker\Delta$ is a sub-groupoid of
	$\mathcal{T}(M)$.
\end{remark}

Every element of $\mathrm{Mor}(\mathcal{T}(M))$ has the following property.
\begin{proposition}\label{preservation_ancre}
	Let $\Phi : (\Lambda,H,\theta) \to (\Lambda',H',\theta')$ be a morphism
	in $\mathcal{T}(M)$. Then $\Phi$ preserves the characteristic sub-bundle
	$\ker\Lambda^\#$; more precisely,
	$$
	\Phi\bigl(\ker\Lambda^\#\bigr) = \ker(\Lambda')^\# = \ker\Lambda^\#.
	$$
	In other words, any two isomorphic $(H,\theta)$-twisted Lie algebroids
	share the same characteristic sub-bundle.
\end{proposition}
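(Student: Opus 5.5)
The plan has two steps: first get $\Phi(\ker\Lambda^\#)=\ker(\Lambda')^\#$ from the anchor condition in \eqref{eq:morphism-bis}, then show $\ker(\Lambda')^\#=\ker\Lambda^\#$ from the equality of the images of the two anchors together with skew-symmetry. The bracket condition is not needed.

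\emph{Step 1.} Let $\alpha\in\Omega^1(M)$. By $(\Lambda')^\#\circ\Phi=\Lambda^\#$ we have $\Lambda^\#\alpha=0$ if and only if $(\Lambda')^\#(\Phi\alpha)=0$. Hence $\Phi(\ker\Lambda^\#)\subseteq\ker(\Lambda')^\#$. Conversely, if $\beta\in\ker(\Lambda')^\#$, then since $\Phi$ is bijective we can write $\beta=\Phi\alpha$. The same identity gives $\Lambda^\#\alpha=(\Lambda')^\#\beta=0$, so $\beta\in\Phi(\ker\Lambda^\#)$. This proves $\Phi(\ker\Lambda^\#)=\ker(\Lambda')^\#$.

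\emph{Step 2.} I would work fibrewise. Since $\Phi$ is a $C^\infty(M)$-linear automorphism of $\Omega^1(M)=\Gamma(T^*M)$, it is induced by a vector bundle automorphism of $T^*M$; write $\Phi_x:T^*_xM\to T^*_xM$ for its fibre at $x\in M$. The anchor identity then holds pointwise, $(\Lambda')^\#_x\circ\Phi_x=\Lambda^\#_x$. Because $\Phi_x$ is invertible, this gives $(\Lambda')^\#_x=\Lambda^\#_x\circ\Phi_x^{-1}$, and therefore $\mathrm{Im}\,(\Lambda')^\#_x=\mathrm{Im}\,\Lambda^\#_x$ for every $x$.

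Next I would use skew-symmetry: $\beta(\Lambda^\#\alpha)=\Lambda(\alpha,\beta)=-\alpha(\Lambda^\#\beta)$. So $\alpha_x\in\ker\Lambda^\#_x$ if and only if $\alpha_x$ vanishes on $\Lambda^\#_x(T_x^*M)$. In other words, $\ker\Lambda^\#_x=(\mathrm{Im}\,\Lambda^\#_x)^{\circ}$, the annihilator of the image. The same holds for $\Lambda'$. Equal images therefore give equal kernels in each fibre, hence equal kernels as sub-bundles, and so $\ker(\Lambda')^\#=\ker\Lambda^\#$ as submodules of $\Omega^1(M)$. Combined with Step 1, this yields the claim.

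The only point needing care is the passage between module statements and fibre statements. It is handled by the standard fact that $C^\infty(M)$-linear endomorphisms of $\Gamma(T^*M)$ are tensorial, so $\Phi$ is a bundle map, and invertibility on sections implies invertibility on fibres because $\Phi^{-1}$ is also a bundle map. The constant-rank assumption guarantees that $\ker\Lambda^\#$ is a genuine smooth sub-bundle, so the pointwise equality can be read as an equality of sub-bundles.
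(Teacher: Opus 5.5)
Your proof is correct and is essentially the paper's argument. In both, the anchor condition together with bijectivity of $\Phi$ gives $\mathrm{Im}\,(\Lambda')^\#=\mathrm{Im}\,\Lambda^\#$, skew-symmetry gives $\ker\Pi^\#=(\mathrm{Im}\,\Pi^\#)^\circ$, and the anchor condition sends $\ker\Lambda^\#$ onto $\ker(\Lambda')^\#$. The only differences are cosmetic: you argue fibrewise rather than at the level of $C^\infty(M)$-modules, and you prove the reverse inclusion directly instead of repeating the argument for $\Phi^{-1}$.
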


\begin{proof}
	Set $K = \ker\Lambda^\#$ and $D = \mathrm{Im}\,\Lambda^\#$.
	From the anchor condition $(\Lambda')^\#\circ\Phi = \Lambda^\#$, and the
	surjectivity of $\Phi$, we obtain
	$\mathrm{Im}\,(\Lambda')^\# = \Lambda^\#\bigl(\Omega^1(M)\bigr) = D$.
	Since, for any bivector $\Pi$, one has
	$\ker\Pi^\# = (\mathrm{Im}\,\Pi^\#)^\circ
	:= \{\alpha\in\Omega^1(M)\mid \alpha(X)=0,\ \forall X\in \mathrm{im}\,\Pi^\#\}$,
	it follows that
	$\ker(\Lambda')^\# = D^\circ = K$.
	
	For any $\kappa \in K$, the anchor condition gives
	$(\Lambda')^\#(\Phi\kappa) = \Lambda^\#\kappa = 0$, hence
	$\Phi\kappa \in \ker(\Lambda')^\# = K$, i.e.\ $\Phi(K)\subseteq K$.
	Applying the same argument to $\Phi^{-1}$ yields $\Phi^{-1}(K)\subseteq K$,
	and bijectivity implies $\Phi(K)=K$.
\end{proof}

\section{Isomorphisms fixing $\theta$: the sub-groupoid $\mathcal{T}_{\mathrm{fix}}$}
\label{sec:tfix}
 Throughout this section, $\Phi : (\Lambda,H,\theta) \to (\Lambda',H',\theta')$ denotes a morphism with $\theta'=\theta$. We identify the sub-groupoid $\mathrm{Gau}(M)\subset\mathcal{T}_{\mathrm{fix}}$ of gauge transformations and show that every element of $\mathcal{T}_{\mathrm{fix}}$ factors through one.

Let $(E,\rho,\langle\cdot,\cdot\rangle,\circ_\theta,D_\theta)$ be a $\theta$-Courant algebroid as defined in
Section~\ref{sec:courant_algebroid}.
For any $2$-form $B\in\Omega^2(M)$, we define the vector bundle automorphism
\begin{equation*}
	e^{B}\colon E\longrightarrow E,\qquad
	e^{B}(X,\xi) = \bigl(X,\ \xi + i_X B\bigr).
\end{equation*}
A direct verification shows that $e^{B}$ preserves both the anchor and the symmetric pairing.

\begin{proposition}\label{prop:Bgauge}
For any $e_1,e_2\in\Gamma(E)$,
\begin{equation}\label{eq:Bformula}
  e^B(e_1)\circ_{\theta,H} e^B(e_2)
  = e^B\bigl(e_1\circ_{\theta,H} e_2\bigr)
    + \bigl(0,\,i_{X_2}i_{X_1}d_{\theta} B\bigr).
\end{equation}
In particular, $e^B$ is an isomorphism of $\theta$-Courant algebroids from
$(E,\circ_{\theta,H})$ to $(E,\circ_{\theta,H'})$ with $H' = H - d_{\theta} B$.
\end{proposition}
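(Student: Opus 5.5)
We prove \eqref{eq:Bformula} by direct expansion of both sides, separately in the vector and covector components. Write $e_i=(X_i,\xi_i)$, so $e^B(e_i)=(X_i,\xi_i+i_{X_i}B)$. Since $e^B$ does not change the vector part, and the vector component of $\circ_{\theta,H}$ is $[X_1,X_2]$, both sides of \eqref{eq:Bformula} have vector component $[X_1,X_2]$. It therefore remains to compare the $1$-form components.

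On the left, bilinearity of the bracket gives the $1$-form part
\[
\mathcal{L}^\theta_{X_1}\xi_2 - i_{X_2}d_\theta\xi_1 + i_{X_2}i_{X_1}H
+\mathcal{L}^\theta_{X_1} i_{X_2}B - i_{X_2}d_\theta i_{X_1}B .
\]
On the right, $e^B(e_1\circ_{\theta,H}e_2)$ adds $i_{[X_1,X_2]}B$ to the $1$-form part of $e_1\circ_{\theta,H}e_2$. So the identity reduces to the purely $B$-dependent claim
\[
\mathcal{L}^\theta_{X_1} i_{X_2}B - i_{X_2}d_\theta i_{X_1}B - i_{[X_1,X_2]}B = i_{X_2}i_{X_1}d_\theta B .
\]
This is the step carrying all the content, and I would prove it using only the $\theta$-Cartan calculus. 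First, \eqref{cartan2} gives $\mathcal{L}^\theta_{X_1}i_{X_2}-i_{[X_1,X_2]}=i_{X_2}\mathcal{L}^\theta_{X_1}$. The left side then becomes
\[
i_{X_2}\bigl(\mathcal{L}^\theta_{X_1}B-d_\theta i_{X_1}B\bigr)=i_{X_2}i_{X_1}d_\theta B,
\]
by the defining formula $\mathcal{L}^\theta_X=d_\theta i_X+i_Xd_\theta$.

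The main obstacle is bookkeeping rather than ideas. One must check that \eqref{cartan2} holds for $\mathcal{L}^\theta$ on forms of degree $2$, which is true because $\theta(X_1)$ is a function commuting with $i_{X_2}$. One must also make sure the sign convention of the $H$-term $i_{X_2}i_{X_1}$ matches the one produced for $d_\theta B$, so that the extra term appears with a plus sign exactly as in \eqref{eq:Bformula}.

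For the final assertion, rewrite \eqref{eq:Bformula} as
\[
e^B(e_1)\circ_{\theta,H-d_\theta B}e^B(e_2)=e^B(e_1\circ_{\theta,H}e_2).
\]
This uses that changing $H$ to $H-d_\theta B$ subtracts $(0,i_{X_2}i_{X_1}d_\theta B)$, and that the vector parts of $e^B(e_i)$ are still $X_i$. We also check that $d_\theta H'=d_\theta H-d_\theta^2B=0$, so $(E,\circ_{\theta,H'})$ is again an $H'$-twisted $\theta$-Courant algebroid. Combined with the already noted preservation of the anchor and the pairing, and with $e^B(D_\theta f)=D_\theta f$ since the vector part of $D_\theta f$ vanishes, this shows $e^B$ is an isomorphism of $\theta$-Courant algebroids. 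Its inverse is $e^{-B}$.
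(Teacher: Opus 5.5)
Your proof is correct and follows the paper's argument. You expand both sides, reduce the identity to $\mathcal{L}^\theta_{X_1}i_{X_2}B - i_{X_2}d_\theta i_{X_1}B - i_{[X_1,X_2]}B = i_{X_2}i_{X_1}d_\theta B$ via \eqref{cartan2} and $\mathcal{L}^\theta_X=d_\theta i_X+i_Xd_\theta$, and conclude with $d_\theta H'=d_\theta H-d_\theta^2B=0$; the only differences are that you split the paper's single identity $\mathcal{L}^\theta_X i_Y - i_Y d_\theta i_X = i_Y i_X d_\theta + i_{[X,Y]}$ into its two steps and additionally check compatibility with $D_\theta$.
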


\begin{proof}
We set $\xi_i'=\xi_i+i_{X_i}B$ for $i=1,2$, so that $e^{B}(e_i)=(X_i\;,\;\xi_i')$. Then
$$
  e^B(e_1)\circ_{\theta,H}e^B(e_2)
  = \Bigl([X_1,X_2]\  ,\   \mathcal{L}^\theta_{X_1}\xi'_2 - i_{X_2}d_{\theta}\xi'_1 + i_{X_2}i_{X_1}H\Bigl).
$$
The $B$-dependent terms are $\mathcal{L}^\theta_{X_1}i_{X_2}B - i_{X_2}d_{\theta} i_{X_1}B$.
Applying the identity
$$
  \mathcal{L}^\theta_X i_Y - i_Yd_{\theta} i_X = i_Yi_Xd_{\theta} + i_{[X,Y]}
$$
which follows from $\mathcal{L}^\theta_X=d_{\theta} i_X+i_Xd_{\theta}$ and~\eqref{cartan2}, we obtain
$$
  \mathcal{L}^\theta_{X_1}i_{X_2}B - i_{X_2}d_{\theta} i_{X_1}B
  = i_{X_2}i_{X_1}d_{\theta} B + i_{[X_1,X_2]}B.
$$
On the other hand, $e^{B}\bigl(e_{1}\circ_{\theta,H} e_{2}\bigr)$ contributes the term
$i_{[X_1,X_2]}B$ coming from the vector part $[X_1,X_2]$. Subtracting, we find that
the difference is precisely $(0,i_{X_2}i_{X_1}d_{\theta}B)$, which establishes~\eqref{eq:Bformula}.
The verification $d_{\theta}H' = d_{\theta}H -d_{\theta}^{2}B = 0$  confirms that
$(E,\circ_{\theta,H'})$ is a valid $\theta$-Courant algebroid, and that $e^{B}$ is the
required isomorphism.
\end{proof}

\begin{proposition}\label{prop:preserve}
Let $(\Lambda,H,\theta)$ be a $\theta$-almost twisted Poisson structure and let
$B\in\Omega^{2}(M)$. If the map
$$
\Phi_{B}=\mathrm{id}+B^\flat\circ\Lambda^\#
\colon T^*M\longrightarrow T^*M
$$
is invertible, then $\Phi_{B}$  induces  a morphism of $\mathcal{T}_{\mathrm{fix}}$.
\end{proposition}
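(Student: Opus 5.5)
The plan is to realise $\Phi_B$ as the map induced on graphs by the gauge transformation $e^{B}$ of Proposition~\ref{prop:Bgauge}. I will identify the target structure $(\Lambda',H',\theta')$ explicitly and then verify the two conditions of \eqref{eq:morphism-bis}.

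\textbf{Step 1: the target structure.} I set
\[
\Lambda'^\# := \Lambda^\#\circ\Phi_B^{-1},\qquad H' := H - d_\theta B,\qquad \theta' := \theta .
\]
Then
\[
e^B(L_\Lambda)=\{(\Lambda^\#\xi,\ \xi+i_{\Lambda^\#\xi}B)\}=\{(\Lambda^\#\xi,\Phi_B\xi)\}.
\]
Since $\Phi_B$ is invertible, this is exactly the graph $L_{\Lambda'}$ of $\Lambda'^\#$.

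I still need $\Lambda'$ to be a bivector, i.e.\ $\Lambda'^\#$ skew. For this I use that $e^B$ preserves the pairing, so $e^B(L_\Lambda)$ is isotropic; isotropy of a graph over $T^*M$ is equivalent to skewness.

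Next, $\Lambda'^\#(\theta)=0$ holds because $\Phi_B\theta=\theta$, since $\Lambda^\#\theta=0$. Indeed $\Lambda'^\#\theta=\Lambda^\#\Phi_B^{-1}\theta=\Lambda^\#\theta=0$.

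Also $d_\theta H'=d_\theta H - d_\theta^2B=0$, which is the same as $dH'=\theta\wedge H'$.

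\textbf{Step 2: the twisted Poisson condition.} Proposition~\ref{prop:Bgauge} says $e^B$ intertwines $\circ_{\theta,H}$ with $\circ_{\theta,H'}$. Hence $e^B(L_\Lambda)=L_{\Lambda'}$ is closed under $\circ_{\theta,H'}$ because $L_\Lambda$ is closed under $\circ_{\theta,H}$. The latter holds by Proposition~\ref{dirac}, since $(\Lambda,H,\theta)$ is $\theta$-atP.

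Applying Proposition~\ref{dirac} in the converse direction, with the hypotheses $d_\theta H'=0$ and $\Lambda'^\#\theta=0$ from Step 1, gives $\tfrac12[\Lambda',\Lambda']=\Lambda'^\#(H')$. So $(\Lambda',H',\theta)\in\mathcal S(M)$.

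\textbf{Step 3: the morphism conditions.} The anchor condition $\Lambda'^\#\circ\Phi_B=\Lambda^\#$ holds by construction.

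For the bracket condition I will use the identity
\[
(\Lambda^\#\xi,\xi)\circ_{\theta,H}(\Lambda^\#\eta,\eta)=\bigl(\Lambda^\#[\xi,\eta]_{H,\theta},\,[\xi,\eta]_{H,\theta}\bigr).
\]
That is, the $1$-form component of the Dorfman bracket restricted to the graph equals the $(H,\theta)$-twisted Koszul bracket. To check this, I expand
\[
\mathcal L^\theta_X\eta-i_Yd_\theta\xi+i_Yi_XH
\]
with $X=\Lambda^\#\xi$ and $Y=\Lambda^\#\eta$. This gives
\[
[\xi,\eta]_K-\theta(X)\eta+\theta(Y)\xi+\xi(Y)\theta+i_Yi_XH .
\]
Using $\theta(\Lambda^\#\xi)=-\Lambda^\#\theta(\xi)=0$, two of these terms drop out. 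With $\xi(Y)=\Lambda(\eta,\xi)$, and after checking the sign conventions, the result matches $\Lambda(\xi,\eta)\theta$ in the twisted Koszul bracket.

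Given this identity, I apply $e^B$ to the graph bracket and invoke Proposition~\ref{prop:Bgauge}. Comparing $1$-form components yields
\[
\Phi_B[\xi,\eta]_{H,\theta}=[\Phi_B\xi,\Phi_B\eta]_{H',\theta}.
\]
This uses the fact that the $1$-form component of $e^B(\Lambda^\#\zeta,\zeta)$ is $\Phi_B\zeta$. Finally, $\theta'=\theta$, so $\Phi_B\in\mathcal T_{\mathrm{fix}}$.

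\textbf{Main obstacle.} I expect the main obstacle to be the sign bookkeeping in Step 3, where the graph-bracket formula must reproduce the $\Lambda(\alpha,\beta)\theta$ term with the correct sign under the paper's conventions for $\Lambda^\#$. If the signs disagree, I would fall back on checking it via $\Lambda^\#\theta=0$ and the skewness of $\Lambda$.
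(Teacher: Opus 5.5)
Your proposal is correct and follows the paper's own route. You transport $L_\Lambda$ by $e^{B}$ via Proposition~\ref{prop:Bgauge}, identify the image as the graph of $\Lambda^\#\circ\Phi_B^{-1}$, apply Proposition~\ref{dirac} to get $(\Lambda',H-d_\theta B,\theta)\in\mathcal{S}(M)$, and read off the bracket condition from the $1$-form components; you spell this out more fully than the paper's ``direct verification'', including skewness of $\Lambda'$ via isotropy and $\Phi_B\theta=\theta$. The one slip is in your expansion: $-i_Y d_\theta\xi$ contributes $-\xi(Y)\theta$, not $+\xi(Y)\theta$, and it is this minus sign together with $\xi(Y)=\Lambda(\eta,\xi)$ that gives the required $+\Lambda(\xi,\eta)\theta$.
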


\begin{proof}
By Proposition~\ref{prop:Bgauge}, $e^{B}$ is an isomorphism from $(E,\circ_{\theta,H})$
to $(E,\circ_{\theta,H'})$, where $H' = H - d_{\theta}B$. The image of $L_{\Lambda}$ under
$e^{B}$ is
$$
L' = e^{B}(L_{\Lambda})
= \Bigl\{\big(\Lambda^\#\xi\,,\;\xi+i_{\Lambda^\#\xi}B\big)\ \bigm|\ \xi\in T^*M\Bigr\}.
$$

Since $e^{B}$ intertwines the two $\theta$-Courant brackets, $L'$ is a Dirac structure for
$\circ_{\theta,H'}$.

Observe that $\xi+i_{\Lambda^\#\xi}B=\Phi_{B}(\xi)$. We set
$\alpha=\Phi_{B}(\xi)$.  The invertibility of $\Phi_{B}$ gives
$\xi=\Phi_{B}^{-1}(\alpha)$. Therefore,
\begin{equation*}
	L'=\Bigl\{\ \Big((\Lambda^\#\circ\Phi_{B}^{-1})\alpha\; ,\; \alpha\;\Big) \ \bigm|\ \alpha=\Phi_B(\xi),\ \xi\in T^*M \Bigr\}
	=\Bigl\{\ \Big((\Lambda^\#\circ\Phi_{B}^{-1})\alpha\; ,\; \alpha\;\Big)\ \bigm|\ \alpha\in T^*M\Bigr\}.
\end{equation*}
This proves that $L'$ is the graph of the bivector $\Lambda'$ defined by
\begin{align}\label{eq:ancre}
	(\Lambda')^\# &= \Lambda^\#\circ\Phi_{B}^{-1}.
\end{align}
By Proposition~\ref{dirac}, $(\Lambda',H',\theta)$ is a $\theta$-almost twisted
Poisson structure. Moreover, the condition $(\Lambda')^\#(\theta)=0$ follows from
$\Lambda^\#(\theta)=0$.

The restriction of $e^{B}$ to $L_{\Lambda}$ gives an isomorphism of vector bundles
$L_{\Lambda}\xrightarrow{\sim}L_{\Lambda'}$ that, by Proposition~\ref{prop:Bgauge},
respects the Lie algebroid brackets. Under the identifications
$T^*M\cong L_{\Lambda}$ and $T^*M\cong L_{\Lambda'}$, this isomorphism is
given by
$$
\Phi_{B}\colon T^*M\longrightarrow T^*M,\qquad
\Phi_{B}(\xi)=\xi+i_{\Lambda^\#\xi}B.
$$
 Since $\Phi_B$ is a smooth fiberwise linear isomorphism of $T^*M$,  $\Phi_B$ induces isomorphism of $C^\infty(M)$-module $\Phi_{B}\colon \Omega^{1}(M)\longrightarrow \Omega^{1}(M)$.
A direct verification using relation~\eqref{eq:ancre} yields
\begin{equation}\label{eq:phiB}
	\Phi_{B}\bigl([\xi,\eta]_{H,\theta}\bigr)
	=[\Phi_{B}(\xi),\Phi_{B}(\eta)]_{H',\theta},
	\qquad
	(\Lambda')^\#\circ\Phi_{B}=\Lambda^\#.
\end{equation}
Hence the proposition is proved.
\end{proof}

   An invertible  $\Phi_{B}$ is called  a gauge transformation; Its composition law  is described in  Proposition~\ref{prop:gauge}.

 \begin{proposition}
 	\label{prop:gauge}
 	Let $(\Lambda,H,\theta)\xrightarrow{\Phi_{B_1}}(\Lambda_{1},H_{1},\theta)
 	\xrightarrow{\Phi_{B_2,\Lambda_1}}(\Lambda_{2},H_{2},\theta)$ be two composable gauge transformations, where
 	$\Phi_{B_2,\Lambda_1}=\mathrm{id}+B_2^{\flat}\circ\Lambda_1^{\#}$ is constructed from the anchor $\Lambda_1^{\#}$ of the intermediate structure.
 	Then
 	$$
 	\Phi_{B_2,\Lambda_1}\circ\Phi_{B_1}=\Phi_{B_1+B_2},
 	$$
 	where the right-hand side is constructed using the anchor $\Lambda^{\#}$ of the original structure.
 	Consequently, $H_{2}=H-d_{\theta}(B_{1}+B_{2})$.
 \end{proposition}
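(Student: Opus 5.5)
The plan is a direct computation that rests on the relation \eqref{eq:ancre} from Proposition~\ref{prop:preserve}. That relation says $\Lambda_1^{\#}=\Lambda^{\#}\circ\Phi_{B_1}^{-1}$, or equivalently $\Lambda_1^{\#}\circ\Phi_{B_1}=\Lambda^{\#}$. This is exactly the anchor condition for the morphism $\Phi_{B_1}$.

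For $\xi\in\Omega^1(M)$ we expand the composite:
\begin{align*}
\Phi_{B_2,\Lambda_1}\bigl(\Phi_{B_1}\xi\bigr)
&=\Phi_{B_1}\xi+i_{\Lambda_1^{\#}(\Phi_{B_1}\xi)}B_2\\
&=\xi+i_{\Lambda^{\#}\xi}B_1+i_{\Lambda^{\#}\xi}B_2\\
&=\xi+i_{\Lambda^{\#}\xi}(B_1+B_2)
=\Phi_{B_1+B_2}(\xi).
\end{align*}
The only input here is the anchor relation. It turns $\Lambda_1^{\#}\circ\Phi_{B_1}$ into $\Lambda^{\#}$, which collapses the second contraction onto the original anchor. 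Linearity of $B\mapsto i_XB$ then combines the two contractions.

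It follows that $\Phi_{B_1+B_2}$ equals the composite of two isomorphisms, so it is invertible. Hence it is a gauge transformation of $(\Lambda,H,\theta)$ in the sense of Proposition~\ref{prop:preserve}.

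For the three-form, Proposition~\ref{prop:preserve} applied twice gives $H_1=H-d_\theta B_1$ and $H_2=H_1-d_\theta B_2$. Linearity of $d_\theta$ then yields $H_2=H-d_\theta(B_1+B_2)$.

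We also want to check that the target of $\Phi_{B_1+B_2}$, as built in Proposition~\ref{prop:preserve}, is the same as $(\Lambda_2,H_2,\theta)$. For the bivector, \eqref{eq:ancre} gives
\[
\Lambda_2^{\#}=\Lambda_1^{\#}\circ\Phi_{B_2,\Lambda_1}^{-1}=\Lambda^{\#}\circ\Phi_{B_1}^{-1}\circ\Phi_{B_2,\Lambda_1}^{-1}=\Lambda^{\#}\circ\Phi_{B_1+B_2}^{-1}.
\]
This is exactly the bivector produced by the gauge transformation with $B_1+B_2$. The three-forms agree by the computation above.

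An alternative route works at the Courant level: one has $e^{B_2}\circ e^{B_1}=e^{B_1+B_2}$ and $e^{B}(L_\Lambda)=L_{\Lambda'}$. I expect no real obstacle. The one point needing care is that $\Phi_{B_2,\Lambda_1}$ uses $\Lambda_1^{\#}$ rather than $\Lambda^{\#}$, and the anchor relation from \eqref{eq:ancre} resolves this.
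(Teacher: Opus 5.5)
Your proof is correct and follows the same route as the paper: the anchor relation $\Lambda_1^{\#}\circ\Phi_{B_1}=\Lambda^{\#}$ collapses the second contraction onto the original anchor $\Lambda^{\#}$, and the formula for $H_2$ comes from applying Proposition~\ref{prop:preserve} twice. Your extra checks are not in the paper's proof but are correct and fill in details the paper leaves implicit: that $\Phi_{B_1+B_2}$ is invertible as a composite of isomorphisms, and that its target bivector satisfies $\Lambda_2^{\#}=\Lambda^{\#}\circ\Phi_{B_1+B_2}^{-1}$.
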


\begin{proof}
	We set $\xi'=\Phi_{B_1}(\xi)=\xi+i_{\Lambda^{\#}\xi}B_1$. By the anchor condition satisfied by
	$\Phi_{B_1}$, we have $\Lambda_1^{\#}(\xi')=\Lambda^{\#}(\xi)$. Hence,
	\begin{align*}
		\Phi_{B_2,\Lambda_1}(\xi')
		=\xi' + i_{\Lambda_1^{\#}(\xi')}B_2 =\xi + i_{\Lambda^{\#}\xi}B_1 + i_{\Lambda^{\#}\xi}B_2 =\xi + i_{\Lambda^{\#}\xi}(B_1+B_2) =\Phi_{B_1+B_2}(\xi).
	\end{align*}
	The formula for $H_2$ follows from Proposition~\ref{prop:preserve} applied twice.
\end{proof}

\begin{remark}
	Proposition~\ref{prop:gauge} implies that for any gauge transformation $\Phi_B$, its inverse
	$\Phi_B^{-1}:=\Phi_{-B}$ is again a gauge transformation.
	Indeed, taking $B_1=B$ and $B_2=-B$ in Proposition~\ref{prop:gauge} gives
	$\Phi_{-B,\Lambda_B}\circ\Phi_B=\Phi_0=\mathrm{id}$.
	Therefore, gauge transformations do not form a subgroup of a fixed automorphism group
	in the naive sense (the formula for $\Phi_B$ depends on the object to which it is applied).
	Rather, the set
	$$
	\mathrm{Gau}(M):=\bigl\{\Phi_B:(\Lambda,H,\theta)\rightarrow(\Lambda_B,H-d_\theta B,\theta)\bigr\}
	$$
	where $\Phi_B=id+B^\flat\circ\Lambda^\#$ is invertible is a sub-groupoid of $\mathcal{T}_{\mathrm{fix}}$, on which composition corresponds to addition of $2$-forms.
\end{remark}

\begin{theorem}\label{thm:classification}
	Let $\Phi$ be a morphism of  $\mathcal{T}_{\mathrm{fix}}$ with source structure $(\Lambda,H,\theta)$.
	Then there exist $\Phi_B\in \mathrm{Gau}(M)$ and a $C^\infty(M)$-linear isomorphism
	$\Psi:\Omega^1(M)\to\Omega^1(M)$ such that
	$$
	\Phi=\Psi\circ\Phi_B.
	$$
	Moreover, $\Psi$ satisfies
	$$
	\Psi(\alpha)-\alpha \in \ker \Lambda^\# \quad \forall \alpha\in\Omega^1(M).
	$$
	In particular, $\Psi$ induces the identity on the quotient $\Omega^1(M)/\ker\Lambda^\#$.
\end{theorem}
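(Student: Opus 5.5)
We construct $B$ directly from $\Phi$ and then set $\Psi:=\Phi\circ\Phi_B^{-1}$. The guiding observation is this. For a gauge transformation, $\Phi_B(\alpha)-\alpha=i_{\Lambda^\#\alpha}B$ depends only on $\Lambda^\#\alpha$. So the plan is to find a $2$-form $B$ whose restriction to $D=\mathrm{Im}\,\Lambda^\#$ encodes the map $\Lambda^\#\alpha\mapsto\Phi(\alpha)-\alpha$, modulo $K=\ker\Lambda^\#$.

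First, I would show that $\Phi(\alpha)-\alpha$ modulo $K$ depends only on $X=\Lambda^\#\alpha$. Indeed, if $\Lambda^\#\alpha=0$, then $\Phi(\alpha)\in K$ by Proposition~\ref{preservation_ancre}. Since $K=D^\circ$, the quotient $\Omega^1/K$ is identified with $\Gamma(D^*)$. Hence we get a well-defined $C^\infty(M)$-linear map $b:\Gamma(D)\to\Gamma(D^*)$ with $b(\Lambda^\#\alpha)=(\Phi\alpha-\alpha)|_D$. Constant rank makes $D$ a smooth subbundle, so $b$ is a bundle map.

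Second, and this is the main obstacle, I would show that $b$ is skew, i.e.\ $\langle b(X),Y\rangle=-\langle b(Y),X\rangle$ for $X,Y\in D$. Write $X=\Lambda^\#\alpha$ and $Y=\Lambda^\#\beta$. The anchor condition $(\Lambda')^\#\Phi=\Lambda^\#$ applied to $\Phi$ and $\Phi^{-1}$ gives $\Lambda'(\Phi\alpha,\Phi\beta)=\Lambda(\alpha,\Phi\beta)=-(\Phi\beta)(\Lambda^\#\alpha)$. Comparing this with the same computation with the roles of $\alpha$ and $\beta$ exchanged, using the skewness of $\Lambda'$ and $\Lambda$, should yield $(\Phi\alpha-\alpha)(\Lambda^\#\beta)+(\Phi\beta-\beta)(\Lambda^\#\alpha)=0$, which is exactly the skewness of $b$. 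I would check this sign chase carefully. If it fails, the fallback is to extract skewness from the graph picture instead: both $L_\Lambda$ and the image of $L_\Lambda$ under $(X,\xi)\mapsto(X,\Phi\xi)$ (which equals $L_{\Lambda'}$) are Lagrangian, and the difference of two Lagrangians with the same projection to $TM$ is given by a skew form on $D$. This is the standard Dirac-geometry fact, and it avoids the bracket condition entirely.

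Third, I would extend the skew form $b\in\Gamma(\wedge^2D^*)$ to some $B\in\Omega^2(M)$. To do this, choose a complement $TM=D\oplus C$ (for instance via a metric) and let $B$ vanish on $C$. Then $i_{\Lambda^\#\alpha}B\equiv\Phi\alpha-\alpha$ modulo $K$. Next I need invertibility of $\Phi_B$. If $\Phi_B\xi=0$, then $\xi\in\mathrm{Im}B^\flat$. Pairing with $\Lambda^\#\xi$ gives $\xi(\Lambda^\#\xi)=0$, which is automatic and gives nothing, so I would argue differently. We have $\Phi_B\xi\equiv\Phi\xi \pmod K$, and $\Lambda'^\#\Phi\xi=\Lambda^\#\xi$. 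Moreover, $\Phi_B\xi=0$ forces $\Phi\xi\in K$, hence $\Lambda^\#\xi=0$, hence $\Phi_B\xi=\xi$, hence $\xi=0$. Injectivity fibrewise then gives invertibility. By Proposition~\ref{prop:preserve}, $\Phi_B\in\mathrm{Gau}(M)$.

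Finally, set $\Psi=\Phi\circ\Phi_B^{-1}$. This is a $C^\infty(M)$-linear isomorphism, and it is itself a morphism in $\mathcal{T}_{\mathrm{fix}}$ by Proposition~\ref{thm:groupoid}. For $\alpha=\Phi_B\xi$ we have $\Psi(\alpha)-\alpha=\Phi\xi-\Phi_B\xi\in K$ by construction of $B$. The induced identity on $\Omega^1(M)/\ker\Lambda^\#$ follows immediately.
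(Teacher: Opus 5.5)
Your proposal is correct and follows the paper's proof essentially step for step. Your skew map $b$ is exactly the paper's form $T(\alpha,\beta)=\Lambda'(\Phi\beta,\Phi\alpha)-\Lambda(\beta,\alpha)=(\Phi\alpha-\alpha)(\Lambda^\#\beta)$, descended to $D$ and extended to a $2$-form $B$. The paper then uses the same invertibility argument ($\Phi_B\alpha=0\Rightarrow\Phi\alpha\in K\Rightarrow\alpha\in K\Rightarrow\alpha=0$) and the same $\Psi=\Phi\circ\Phi_B^{-1}$.

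Two minor remarks. With the paper's convention $\Lambda(\alpha,\Phi\beta)=+(\Phi\beta)(\Lambda^\#\alpha)$, not $-(\Phi\beta)(\Lambda^\#\alpha)$. That sign appears on both sides of your comparison, so the skewness conclusion is unaffected, and only the anchor condition for $\Phi$ (not $\Phi^{-1}$) is needed. Your global complement via a metric is also a cleaner way to extend $b$ than the paper's local-complements-plus-partition-of-unity phrasing.
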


\begin{proof}
	Let $(\Lambda,H,\theta)\xrightarrow{\Phi}(\Lambda',H',\theta)$ be a morphism of $\mathcal{T}_{\mathrm{fix}}$.
	Set $K=\ker\Lambda^\#$ and $D=\operatorname{im}\Lambda^\#\subset\mathfrak{X}^1(M)$. By Proposition~\ref{preservation_ancre},
	we have $\Phi(K)=K$.	
	We define a $C^\infty(M)$-bilinear form $T:\Omega^1(M)\times\Omega^1(M)\to C^\infty(M)$ by
	$
	T(\alpha,\beta)=\Lambda'\bigl(\Phi(\beta),\Phi(\alpha)\bigr)-\Lambda(\beta,\alpha).
	$
	This expression is antisymmetric and vanishes whenever at least one argument lies in $K$.
	Therefore, it descends to a well-defined form $\widetilde{T}$ on $D$.
	
	We extend $\widetilde{T}$ to a global $2$-form $B\in\Omega^2(M)$ as follows.
	For each open set $U\subset M$, choose a complement $E_U$ of $D|_U$ such that
	$\mathfrak{X}^1(U)=D|_U\oplus E_U$. We define a local $2$-form $B_U$ by setting
	$B_U(X,Y)=\widetilde{T}(X,Y)$ for $X,Y\in D|_U$, and $B_U=0$ if at least one argument lies in $E_U$.
	These local forms agree on intersections when restricted to $D\times D$.
	Hence, by using a partition of unity, we obtain a global $B$ such that
	$$
	B(\Lambda^\#\alpha,\Lambda^\#\beta)=T(\alpha,\beta)
	\quad \forall\,\alpha,\beta\in\Omega^1(M).
	$$
	
	Equivalently, we have
	\begin{align}\label{eq:preserve2}
		B(\Lambda^\#\alpha,Y)=\Big(\Phi(\alpha)-\alpha\Big)(Y)
		\quad \forall\,Y\in D.
	\end{align}
	
	We define $\Phi_B:\Omega^1(M)\to\Omega^1(M)$ by
	$\Phi_B(\alpha)=\alpha+i_{\Lambda^\#\alpha}B$. From \eqref{eq:preserve2} we compute
	\begin{align*}
		\Lambda^\#\bigl(\Phi_B\alpha-\Phi\alpha\bigr)(\beta)
		&= -\bigl(\Phi_B\alpha-\Phi\alpha\bigr)\bigl(\Lambda^\#\beta\bigr) \\
		&= -\alpha\bigl(\Lambda^\#\beta\bigr)-B\bigl(\Lambda^\#\alpha,\Lambda^\#\beta\bigr)+\Phi(\alpha)\bigl(\Lambda^\#\beta\bigr) \\
		&= -\alpha\bigl(\Lambda^\#\beta\bigr)-\bigl(\Phi(\alpha)-\alpha\bigr)\bigl(\Lambda^\#\beta\bigr)+\Phi(\alpha)\bigl(\Lambda^\#\beta\bigr) \\
		&= 0.
	\end{align*}
	Thus $\Phi_B\alpha-\Phi\alpha\in K$.  We claim that $\Phi_B$ is invertible.
Indeed, suppose $\Phi_B\alpha=0$. Since $\Phi_B\alpha-\Phi\alpha\in K$, we have $\Phi\alpha=-(\Phi_B\alpha-\Phi\alpha)\in K$.
  By Proposition~\ref{preservation_ancre}, $\alpha\in K$.  But for $\alpha\in K$,  $\Phi_B\alpha=\alpha$. So $\alpha=0$. Thus $\Phi_B$ is injective. Since $\Phi_B$ is a fiberwise endomorphism of the finite-rank vector bundle $T^*M$, the  injectivity of $\Phi_B$ implies its invertibility.
	Therefore, $\Phi_B$ is a gauge transformation from $(\Lambda,H,\theta)$ to
	$(\Lambda_B,H-d_\theta B,\theta)$, where $(\Lambda_B)^\#=\Lambda^\#\circ \Phi_B^{-1}$.
	
	We now set $\Psi=\Phi\circ\Phi_B^{-1}$. Then $\Psi$ is a morphism from $(\Lambda_B,H_B,\theta)$
	to $(\Lambda',H',\theta)$. Finally, putting $\beta=\Phi_B^{-1}(\alpha)$ yields
	\begin{align*}
		\Psi(\alpha)-\alpha
		= \Phi\bigl(\Phi_B^{-1}\alpha\bigr)-\alpha = \Phi(\beta)-\Phi_B(\beta) \in K,
	\end{align*}
	because $\Phi_B(\beta)-\Phi(\beta)\in K$. Hence $\Psi(\alpha)-\alpha\in K$ for all $\alpha$.
	The factorization $\Phi=\Psi\circ\Phi_B$ is thus established.
\end{proof}

 \begin{corollary}
 	Every morphism of  $\mathcal{T}_{\mathrm{fix}}$ with a non-degenerate source bivector $\Lambda$ is a gauge transformation.
 \end{corollary}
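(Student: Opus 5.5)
The plan is to read the corollary off Theorem~\ref{thm:classification} by exploiting the fact that nondegeneracy kills the characteristic sub-bundle. Let $\Phi:(\Lambda,H,\theta)\to(\Lambda',H',\theta)$ be a morphism of $\mathcal{T}_{\mathrm{fix}}$ with $\Lambda$ nondegenerate. Then $\Lambda^\#:T^*M\to TM$ is a fiberwise isomorphism, so $K=\ker\Lambda^\#=0$. By Proposition~\ref{preservation_ancre}, also $\ker(\Lambda')^\#=K=0$. Hence the target bivector is nondegenerate too, although this is not strictly needed.

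Next I apply Theorem~\ref{thm:classification}. It gives $\Phi=\Psi\circ\Phi_B$, with $\Phi_B\in\mathrm{Gau}(M)$ going from $(\Lambda,H,\theta)$ to $(\Lambda_B,H-d_\theta B,\theta)$, and with $\Psi(\alpha)-\alpha\in K$ for all $\alpha$. Since $K=0$, this forces $\Psi=\mathrm{id}_{\Omega^1(M)}$, and therefore $\Phi=\Phi_B$.

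It remains to check that the target objects agree, so that $\Phi$ is genuinely the gauge morphism $(\Lambda,H,\theta)\to(\Lambda_B,H-d_\theta B,\theta)$ and not merely the same linear map. For the bivectors, the anchor conditions give $(\Lambda')^\#=\Lambda^\#\circ\Phi^{-1}=\Lambda^\#\circ\Phi_B^{-1}=(\Lambda_B)^\#$, so $\Lambda'=\Lambda_B$. For the 3-forms, I compare the two brackets $[\cdot,\cdot]_{H',\theta}$ and $[\cdot,\cdot]_{H-d_\theta B,\theta}$. Both equal $\Phi[\Phi^{-1}\cdot,\Phi^{-1}\cdot]_{H,\theta}$, so they coincide. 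Since they are built from the same bivector $\Lambda'$ and the same $\theta$, their difference is $i_{\Lambda'^\#\beta}i_{\Lambda'^\#\alpha}(H'-(H-d_\theta B))=0$ for all $\alpha,\beta$. Because $\Lambda'^\#$ is surjective onto $TM$, this gives $i_Yi_X(H'-H+d_\theta B)=0$ for all vector fields $X,Y$, hence $H'=H-d_\theta B$. This identification of $H'$ is the only non-formal step; everything else is immediate from the theorem.
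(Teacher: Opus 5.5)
Your proposal is correct and follows essentially the paper's route: both deduce the corollary from Theorem~\ref{thm:classification}. The paper uses the identity $i_{\Lambda^\#\xi}B=\Phi\xi-\xi$ coming from the construction of $B$, while you use $\Psi=\mathrm{id}$ because $K=0$. Your extra check that the targets coincide ($\Lambda'=\Lambda_B$ and $H'=H-d_\theta B$, via surjectivity of $(\Lambda')^\#$) is sound, and it fills in a point the paper leaves implicit, since the paper only identifies the underlying linear maps.
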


 \begin{proof} Let $\Phi$  be a morphism of  $\mathcal{T}_{\mathrm{fix}}$ with a non-degenerate source bivector $\Lambda$.
 	The map  $\Lambda^\#:\; \Omega^1(M) \rightarrow \mathfrak{X}^1(M)$ is an isomorphism. By Theorem~\ref{thm:classification},  a bivector field  $B$ is given as
 	$B(\Lambda^\# \xi, \Lambda^\# \eta) = (\Phi \xi - \xi)(\Lambda^\# \eta)$. So, we have $i_{\Lambda^\# \xi} B = \Phi \xi - \xi$. Consequently,
 	$\Phi_B(\xi) = \xi + i_{\Lambda^\# \xi} B = \Phi(\xi)$,
 	which shows that $\Phi_B = \Phi$. Hence every such a morphism is a gauge transformation.
 \end{proof}

\section{Isomorphisms shifting $\theta$: the family $\mathcal{T}_{\mathrm{mod}}$}
\label{sec:Tmod}
The complementary family $\mathcal{T}_{\mathrm{mod}}$ consists of morphisms with $\Delta(\Phi) = \theta' - \theta \neq 0$. 

Let $(\Lambda, H, \theta)$ be a $\theta$-atP structure on $M$, and let $f \in \mathrm{Cas}(\Lambda) := \{f \in C^\infty(M) : \Lambda^\#(df) = 0\}$. Define
\begin{equation}\label{eq:conformal-deformation}
	\Lambda_f = e^{-f}\Lambda, \qquad
	H_f = e^f H, \qquad
	\theta_f = \theta + df.
\end{equation}

\begin{proposition}\label{prop:struct}
	The triple $(\Lambda_f, H_f, \theta_f)$ defined by \eqref{eq:conformal-deformation} is a $\theta$-atP structure on $M$.
\end{proposition}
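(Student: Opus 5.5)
We will check the four defining conditions of Definition~\ref{def_theta_alP} one at a time for $(\Lambda_f,H_f,\theta_f)$. The first two are immediate. Closedness holds because $d\theta_f=d\theta+d^2f=0$. For the annihilation condition, note that $\Lambda_f^\#=e^{-f}\Lambda^\#$, since $\#$ is $C^\infty(M)$-linear in $\Lambda$. Hence
\[
\Lambda_f^\#(\theta_f)=e^{-f}\bigl(\Lambda^\#\theta+\Lambda^\#(df)\bigr)=0,
\]
using $\Lambda^\#(\theta)=0$ and $f\in\mathrm{Cas}(\Lambda)$.

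For the twisting condition $dH_f=\theta_f\wedge H_f$, we compute
\[
d(e^fH)=e^f\,df\wedge H+e^f\,dH=e^f\,df\wedge H+e^f\,\theta\wedge H=(\theta+df)\wedge e^fH .
\]
This uses only $dH=\theta\wedge H$.

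The main step is the bracket condition $\tfrac12[\Lambda_f,\Lambda_f]=\Lambda_f^\#(H_f)$. We expand the Schouten bracket using its graded Leibniz rule for a function times a bivector:
\[
[g\Lambda,g\Lambda]=g^2[\Lambda,\Lambda]-2g\,\Lambda\wedge\Lambda^\#(dg),
\]
where the sign and normalisation convention follow \cite[Theorem 2.8]{CrainicFernandesMarcut2021}. The precise constant does not matter here. With $g=e^{-f}$ we have $dg=-e^{-f}df$, so the correction term is proportional to $\Lambda^\#(df)=0$, and it vanishes by the Casimir property. Therefore
\[
\tfrac12[\Lambda_f,\Lambda_f]=e^{-2f}\,\tfrac12[\Lambda,\Lambda]=e^{-2f}\Lambda^\#(H).
\]

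On the other side, $\Lambda_f^\#$ acting on a $3$-form involves three copies of $\Lambda_f^\#=e^{-f}\Lambda^\#$, by the defining formula $\Lambda^\#(\psi)(\alpha_1,\alpha_2,\alpha_3)=-\psi(\Lambda^\#\alpha_1,\Lambda^\#\alpha_2,\Lambda^\#\alpha_3)$. This gives
\[
\Lambda_f^\#(H_f)=e^{-3f}e^{f}\Lambda^\#(H)=e^{-2f}\Lambda^\#(H),
\]
and the two sides agree.

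The only delicate point is the Leibniz expansion of $[g\Lambda,g\Lambda]$. Its exact form depends on the sign conventions, but in every convention the extra term is built from $\Lambda^\#(dg)$. An alternative check, if needed, is to write $[\Lambda,\Lambda]$ via the Jacobiator of the bracket $\{a,b\}=\Lambda(da,db)$: rescaling by $e^{-f}$ introduces terms containing $\{f,\cdot\}=\pm\Lambda^\#(df)(\cdot)$, and these vanish because $f$ is a Casimir.
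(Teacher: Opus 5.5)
Your proof is correct and follows the same route as the paper: you check the four conditions of Definition~\ref{def_theta_alP} directly, using $\Lambda_f^\#=e^{-f}\Lambda^\#$, the Casimir property (to kill the Leibniz correction term in $[e^{-f}\Lambda,e^{-f}\Lambda]$), and $dH=\theta\wedge H$ for the twisting condition. Your explicit Leibniz expansion and the computation $\Lambda_f^\#(H_f)=e^{-3f}e^{f}\Lambda^\#(H)=e^{-2f}\Lambda^\#(H)$ spell out steps that the paper leaves implicit in its one-line Schouten bracket computation.
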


\begin{proof}
	We verify the four conditions of Definition~\ref{def_theta_alP} for $(\Lambda_f, H_f, \theta_f)$.
	
	The 1-form $\theta_f$ is closed since $d\theta = 0$. We have
	$$
	\Lambda_f^{\#}(\theta_f) = (e^{-f}\Lambda)^{\#}(\theta + df)
	= e^{-f}\left(\Lambda^{\#}(\theta) + \Lambda^{\#}(df)\right) = 0.
	$$
	
	For the Schouten-Nijenhuis bracket, the Casimir condition yields
	\begin{align*}
		\tfrac{1}{2}[\Lambda_f,\Lambda_f]
		= \tfrac{1}{2}[e^{-f}\Lambda, e^{-f}\Lambda] = \tfrac{1}{2}e^{-2f}[\Lambda, \Lambda] = \Lambda_f^{\#}(H_f).
	\end{align*}
	
	Finally,
	$$
	dH_f = d(e^f H) = e^f\,df \wedge H + e^f\,dH
	= e^f(df + \theta) \wedge H = \theta_f \wedge H_f.
	$$
\end{proof}

\begin{proposition}\label{thm:main}
	Let $f$ be a nonconstant Casimir function with respect to $\Lambda$. The $C^\infty(M)$-linear map
	$$
	\Psi_f : \Omega^1(M) \longrightarrow \Omega^1(M), \qquad \alpha \longmapsto e^f\alpha,
	$$
	is an isomorphism of $(H,\theta)$-twisted Lie algebroids from $A(\Lambda,H,\theta)$ to $A(\Lambda_f, H_f, \theta_f)$, hence a morphism  of $\mathcal{T}_{\mathrm{mod}}$ satisfying $\Delta(\Psi_f) = df$.
\end{proposition}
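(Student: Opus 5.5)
The plan is to check directly the two conditions of Definition~\ref{def:morphism} for $\Psi_f$, with source $(\Lambda,H,\theta)$ and target $(\Lambda_f,H_f,\theta_f)$. The target is a $\theta$-atP structure by Proposition~\ref{prop:struct}, so it is an object of $\mathcal{T}(M)$.

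First, $\Psi_f$ is $C^\infty(M)$-linear. It is invertible, with inverse $\alpha\mapsto e^{-f}\alpha$, because $e^f>0$.

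The anchor condition is immediate:
$$\Lambda_f^\#(e^f\alpha)=e^{-f}e^{f}\Lambda^\#\alpha=\Lambda^\#\alpha.$$
I will write $X=\Lambda^\#\alpha$ and $Y=\Lambda^\#\beta$. By the anchor condition, these are also the $\Lambda_f$-anchors of $e^f\alpha$ and $e^f\beta$.

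The main computation is the bracket condition. I expand $[e^f\alpha,e^f\beta]_{H_f,\theta_f}$ term by term.

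\begin{enumerate}
\item For the Lie derivative terms, $\mathcal{L}_X(e^f\beta)=e^f\mathcal{L}_X\beta+X(f)e^f\beta$. The Casimir hypothesis gives $X(f)=df(\Lambda^\#\alpha)=-\alpha(\Lambda^\#df)=0$, so the extra term vanishes. The same holds for $\mathcal{L}_Y(e^f\alpha)$.
\item For the exact term, $\Lambda_f(e^f\alpha,e^f\beta)=e^f\Lambda(\alpha,\beta)$. Hence
$$d\bigl(e^f\Lambda(\alpha,\beta)\bigr)=e^f\,d\Lambda(\alpha,\beta)+e^f\Lambda(\alpha,\beta)\,df.$$
So the Koszul part equals $e^f[\alpha,\beta]_K-e^f\Lambda(\alpha,\beta)\,df$.
\item The $H$-term is $i_Yi_X(e^fH)=e^f\,i_Yi_XH$.
\item The $\theta$-term is $e^f\Lambda(\alpha,\beta)(\theta+df)$.
\end{enumerate}

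Summing, the two $df$ contributions cancel. This leaves $e^f[\alpha,\beta]_{H,\theta}=\Psi_f([\alpha,\beta]_{H,\theta})$.

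The only delicate point is this cancellation of $df$ terms, which is precisely why $\theta$ must shift by $df$. I expect it, together with the vanishing of $X(f)$ forced by $f\in\mathrm{Cas}(\Lambda)$, to be the main thing to verify carefully.

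Finally, $\Delta(\Psi_f)=\theta_f-\theta=df$. Since $f$ is nonconstant, $df\neq0$; this is tacitly assuming $M$ connected, otherwise one needs $df\not\equiv0$, which holds as soon as $f$ is not locally constant. Hence $\Psi_f\notin\ker\Delta$, i.e.\ $\Psi_f\in\mathcal{T}_{\mathrm{mod}}$.
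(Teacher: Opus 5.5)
Your proposal is correct and follows essentially the same route as the paper. You check invertibility and the anchor identity $\Lambda_f^\#(e^f\alpha)=\Lambda^\#\alpha$, then expand $[e^f\alpha,e^f\beta]_{H_f,\theta_f}$ term by term so that the $-e^f\Lambda(\alpha,\beta)\,df$ coming from the exact term cancels against the $df$ part of $\theta_f$, and finally read off $\Delta(\Psi_f)=df$. Two points are left implicit in the paper and you make them explicit. First, the Casimir condition kills the terms $X(f)e^f\beta$ and $Y(f)e^f\alpha$. Second, landing in $\mathcal{T}_{\mathrm{mod}}$ really requires $f$ not locally constant, which matches the paper's subsequent remark.
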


\begin{proof}
	The map $\Psi_f$ is  invertible, with inverse $\alpha\mapsto e^{-f}\alpha$. We verify the two conditions of \eqref{eq:morphism-bis}.
	
	\emph{Anchor condition:} We have
	$$
	\Lambda_f^\#(\Psi_f(\alpha)) = e^{-f}\Lambda^\#(e^f \alpha) = \Lambda^\#(\alpha).
	$$
	
	\emph{Bracket condition:} We compute
	\begin{align*}
		[\Psi_f(\alpha), \Psi_f(\beta)]_{H_f,\theta_f}
		&= [e^f\alpha, e^f\beta]_{H_f,\theta_f} \\
		&= e^f[\alpha,\beta]_{K} -e^f \Lambda(\alpha,\beta)df+ e^f i_{\Lambda^\#\beta}i_{\Lambda^\#\alpha}H + e^f \Lambda(\alpha,\beta)\theta_f \\
		&= e^f\left([\alpha,\beta]_{K} + i_{\Lambda^\#\beta}i_{\Lambda^\#\alpha}H + \Lambda(\alpha,\beta)\theta\right) \\
		&= \Psi_f([\alpha,\beta]_{H,\theta}).
	\end{align*}
	
	Finally, we have $\Delta(\Psi_f) = \theta_f - \theta = df$, which confirms that $\Psi_f \in \mathrm{Mor}(\mathcal{T}_{\mathrm{mod}})$.
\end{proof}

\begin{remark}
	When $f$ is locally constant, $df = 0$ and $\Psi_f$  is  a morphism of $\in \mathcal{T}_{\mathrm{fix}}$.
\end{remark}

We now give a description of the fiber $\Delta^{-1}(df)$ over an exact class.

Fix an object $X = (\Lambda, H, \theta)$ of $\mathcal{T}(M)$ and a Casimir function $f \in \mathrm{Cas}(\Lambda)$, and write $Y_f = (\Lambda_f, H_f, \theta_f)$ for the target of the conformal Casimir transformation $\Psi_f$ constructed in Proposition~\ref{prop:struct}. We wish to describe \emph{all} morphisms issuing from $X$ that realize the shift $df$ under the classifying functor. We define
$$
\mathcal{F}_X(df) \;:=\; \bigl\{ \Phi \in \mathrm{Mor}(\mathcal{T}(M)) \;:\; \mathrm{source}(\Phi) = X,\ \ \Delta(\Phi) = df \bigr\}.
$$

\begin{proposition}\label{prop:reduction}
	The map
	$$
	\tau \colon \mathcal{F}_X(df) \longrightarrow \bigl\{ \chi \in \mathcal{T}_{\mathrm{fix}} : \mathrm{source}(\chi) = Y_f \bigr\},
	\qquad \tau(\Phi) = \Phi \circ \Psi_f^{-1},
	$$
	is a bijection, with inverse $\sigma(\chi) = \chi \circ \Psi_f$.
\end{proposition}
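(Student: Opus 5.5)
The argument is purely groupoid-theoretic. It uses Proposition~\ref{thm:main}, the groupoid structure of Proposition~\ref{thm:groupoid}, and the functoriality of $\Delta$ from Property~\ref{prop:Delta-functeur}. Write $\Psi_f : X \to Y_f$ for the conformal Casimir transformation. By Proposition~\ref{thm:groupoid}, its inverse $\Psi_f^{-1}$, given by $\alpha\mapsto e^{-f}\alpha$, is a morphism $Y_f\to X$ with $\Delta(\Psi_f^{-1})=-df$ by Property~\ref{prop:Delta-functeur}(iii). The plan has three steps: show $\tau$ lands in the stated set, show $\sigma$ lands in $\mathcal{F}_X(df)$, and check that the two maps are mutually inverse.

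\emph{Step 1: $\tau$ is well defined.} Take $\Phi\in\mathcal{F}_X(df)$ with target $Z$. Since $\operatorname{source}(\Phi)=X=\operatorname{target}(\Psi_f^{-1})$, the composite $\Phi\circ\Psi_f^{-1}$ is a composable pair. Hence it is a morphism $Y_f\to Z$ in $\mathcal{T}(M)$, by the composition axiom of Proposition~\ref{thm:groupoid}. Property~\ref{prop:Delta-functeur}(ii),(iii) gives
\[
\Delta(\Phi\circ\Psi_f^{-1})=\Delta(\Phi)+\Delta(\Psi_f^{-1})=df-df=0 .
\]
So $\tau(\Phi)\in\ker\Delta=\mathcal{T}_{\mathrm{fix}}$, and its source is $Y_f$.

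\emph{Step 2: $\sigma$ is well defined.} Take $\chi\in\mathcal{T}_{\mathrm{fix}}$ with source $Y_f$. Then $\chi\circ\Psi_f$ is composable and defines a morphism with source $X$. By additivity, $\Delta(\chi\circ\Psi_f)=0+df=df$, so $\sigma(\chi)\in\mathcal{F}_X(df)$.

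\emph{Step 3: the maps are inverse.} Associativity of composition of maps gives
\[
\sigma(\tau(\Phi))=\Phi\circ\Psi_f^{-1}\circ\Psi_f=\Phi,
\qquad
\tau(\sigma(\chi))=\chi\circ\Psi_f\circ\Psi_f^{-1}=\chi .
\]
Hence $\tau$ is a bijection with inverse $\sigma$.

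There is no genuine obstacle here; the only points needing care are bookkeeping. First, the equalities $\Delta(\Phi)=df$ and $\Delta(\Psi_f)=df$ are equalities of actual closed $1$-forms, not of classes, so the cancellation is literal. Second, the target $\theta$ of $\Phi$ is $\theta+df=\theta_f$. This is exactly why $\tau(\Phi)$ fixes $\theta_f$, which is the meaning of membership in $\mathcal{T}_{\mathrm{fix}}$ for a morphism with source $Y_f$.
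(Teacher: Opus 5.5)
Your proof is correct and follows the paper's argument. It uses the same three ingredients: composability with $\Psi_f^{-1}\colon Y_f\to X$, the cancellation $\Delta(\Phi\circ\Psi_f^{-1})=df-df=0$ from Property~\ref{prop:Delta-functeur}, and associativity to show that $\tau$ and $\sigma$ are mutually inverse. Your Step 2, which checks explicitly that $\Delta(\chi\circ\Psi_f)=df$ so that $\sigma$ lands in $\mathcal{F}_X(df)$, fills in a point the paper only asserts.
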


\begin{proof}
	Let $\Phi \in \mathcal{F}_X(df)$, so that $\Phi \colon X \to (\Lambda', H', \theta')$ is a morphism of $\mathcal{T}(M)$ with $\Delta(\Phi) = \theta' - \theta = df$.
	
	The map $\Psi_f^{-1}$ has source $Y_f$ and target $X$. Then $\tau(\Phi) = \Phi \circ \Psi_f^{-1}$ is well composed. Moreover, by Property~\ref{prop:Delta-functeur}, we have
	$$
	\Delta\bigl(\Phi \circ \Psi_f^{-1}\bigr) \;=\; \Delta(\Phi) + \Delta\bigl(\Psi_f^{-1}\bigr)
	\;=\; \Delta(\Phi) - \Delta(\Psi_f) \;=\; df - df \;=\; 0.
	$$
	Hence $\tau(\Phi) = \Phi \circ \Psi_f^{-1}$ lies in $\mathcal{T}_{\mathrm{fix}}$. Thus $\tau$ and $\sigma$ are well defined.
	
	The associativity of composition in the groupoid $\mathcal{T}(M)$ gives
	$$
	\sigma(\tau(\Phi)) \;=\; \bigl(\Phi \circ \Psi_f^{-1}\bigr) \circ \Psi_f
	\;=\; \Phi \circ \bigl(\Psi_f^{-1} \circ \Psi_f\bigr) \;=\; \Phi \circ \mathrm{id} \;=\; \Phi.
	$$
	Symmetrically, for $\chi \in \mathcal{T}_{\mathrm{fix}}$ with source $Y_f$, we get
	$$
	\tau(\sigma(\chi)) \;=\; \bigl(\chi \circ \Psi_f\bigr) \circ \Psi_f^{-1}
	\;=\; \chi \circ \bigl(\Psi_f \circ \Psi_f^{-1}\bigr) \;=\; \chi \circ \mathrm{id} \;=\; \chi.
	$$
	This proves the proposition.
\end{proof}

Proposition~\ref{prop:reduction} shows that $\mathcal{F}_X(df)$ is entirely governed by the star of $\mathcal{T}_{\mathrm{fix}}$ at $Y_f$ (that is, by all morphisms of $\mathcal{T}_{\mathrm{fix}}$ issuing from $Y_f$), to which Theorem~\ref{thm:classification} applies directly, with source structure $Y_f = (\Lambda_f, H_f, \theta_f)$ in place of $(\Lambda, H, \theta)$. We now make the resulting description completely explicit.

\begin{lemma}\label{prop:conformal-gauge}
	Let $B \in \Omega^2(M)$ be such that $\Phi_B = \mathrm{id} + B^{\flat} \circ \Lambda_f^{\#}$ is invertible. Then the map
	$$
	\Theta_{f,B} \;:\; \Omega^1(M) \longrightarrow \Omega^1(M), \qquad \alpha \longmapsto e^f \alpha + i_{\Lambda^{\#}\alpha} B
	$$
	is a morphism of $\mathcal{T}(M)$ with $\Delta(\Theta_{f,B}) = df$.
\end{lemma}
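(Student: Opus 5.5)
The plan is to write $\Theta_{f,B}$ as a composite of two morphisms already constructed earlier in the paper, namely
$$
\Theta_{f,B}=\Phi_B\circ\Psi_f ,
$$
where $\Psi_f\colon(\Lambda,H,\theta)\to(\Lambda_f,H_f,\theta_f)$ is the conformal Casimir transformation of Proposition~\ref{thm:main}. Here $\Phi_B=\mathrm{id}+B^{\flat}\circ\Lambda_f^{\#}$ is the gauge transformation issued from $Y_f=(\Lambda_f,H_f,\theta_f)$. By Proposition~\ref{prop:struct}, $Y_f$ is a $\theta$-atP structure. Since $\Phi_B$ is invertible by hypothesis, Proposition~\ref{prop:preserve} applied with source $Y_f$ shows that
$$
\Phi_B\colon Y_f\to\bigl((\Lambda_f)_B,\;H_f-d_{\theta_f}B,\;\theta_f\bigr)
$$
is a morphism of $\mathcal{T}_{\mathrm{fix}}$.

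First I check the identity of maps pointwise. For $\alpha\in\Omega^1(M)$,
$$
\Phi_B(\Psi_f\alpha)=e^f\alpha+i_{\Lambda_f^{\#}(e^f\alpha)}B .
$$
Since $\Lambda_f^{\#}(e^f\alpha)=e^{-f}e^{f}\Lambda^{\#}\alpha=\Lambda^{\#}\alpha$, which is exactly the anchor computation in the proof of Proposition~\ref{thm:main}, this equals $e^f\alpha+i_{\Lambda^{\#}\alpha}B=\Theta_{f,B}(\alpha)$.

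By Proposition~\ref{thm:groupoid}, morphisms of $\mathcal{T}(M)$ compose to morphisms, so $\Theta_{f,B}$ is a morphism from $(\Lambda,H,\theta)$ to $\bigl((\Lambda_f)_B,\,e^fH-d_{\theta+df}B,\,\theta+df\bigr)$. Finally, Property~\ref{prop:Delta-functeur}(ii) gives
$$
\Delta(\Theta_{f,B})=\Delta(\Phi_B)+\Delta(\Psi_f)=0+df=df .
$$

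The only point needing care is that Proposition~\ref{thm:main} is stated for nonconstant $f$. For locally constant $f$, however, the same computation goes through, with $df=0$ and $\Psi_f$ a morphism of $\mathcal{T}_{\mathrm{fix}}$, as noted in the subsequent remark. So the argument covers every $f\in\mathrm{Cas}(\Lambda)$.

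The main subtlety is therefore bookkeeping rather than computation. One must make sure the gauge map is built from the anchor $\Lambda_f^{\#}$ of the intermediate object $Y_f$, not from $\Lambda^{\#}$, so that the composition rule applies and the $e^{-f}$ factor cancels. No new bracket verification is required.
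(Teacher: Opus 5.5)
Your proposal is correct and follows the paper's proof essentially verbatim: you factor $\Theta_{f,B}=\Phi_B\circ\Psi_f$ through $Y_f$, invoke Propositions~\ref{thm:main} and~\ref{prop:preserve} and the groupoid composition, and check the formula via $\Lambda_f^{\#}(e^f\alpha)=\Lambda^{\#}\alpha$. Your additivity computation of $\Delta$ and your remark that Proposition~\ref{thm:main} also covers locally constant $f$ are harmless refinements of the same argument.
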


\begin{proof}
	We claim that the composite $\Phi_B \circ \Psi_f$ is well defined in $\mathcal{T}(M)$. Indeed, according to Proposition~\ref{thm:main}, $\Psi_f$ is a morphism from $X = (\Lambda, H, \theta)$ to $Y_f = (\Lambda_f, H_f, \theta_f)$. By Proposition~\ref{prop:preserve}, since $\Phi_B = \mathrm{id} + B^{\flat} \circ \Lambda_f^{\#}$ is assumed invertible, $\Phi_B$ is an element of $\mathcal{T}_{\mathrm{fix}}$ with source $Y_f$, namely a morphism from $Y_f$ to $(\Lambda_f^B, H_f - d_{\theta_f} B, \theta_f)$, where $(\Lambda_f^B)^{\#} = \Lambda_f^{\#} \circ \Phi_B^{-1}$. As $\mathcal{T}(M)$ is a groupoid, the composite $\Phi_B \circ \Psi_f$ is again a morphism of $\mathcal{T}(M)$, with source $X$ and target $(\Lambda_f^B, H_f - d_{\theta_f} B, \theta_f)$, as claimed.
	
	Moreover, we have
	$$
	\Delta(\Phi_B \circ \Psi_f) = \theta_f - \theta = (\theta + df) - \theta = df.
	$$
	
	We now derive the explicit formula for $\Phi_B \circ \Psi_f$. Using the explicit formulas for $\Psi_f$ and $\Phi_B$, we obtain
	$$
	\Phi_B \circ \Psi_f(\alpha) = \Phi_B(e^f \alpha) = e^f \alpha + i_{\Lambda_f^{\#}(e^f \alpha)} B = e^f \alpha + i_{\Lambda^{\#}\alpha} B = \Theta_{f,B}(\alpha).
	$$
	The lemma is thus proved.
\end{proof}

\begin{lemma}\label{lem:B-equiv}
	Let $B_1, B_2 \in \Omega^2(M)$ be such that $\Theta_{f,B_1}$ and $\Theta_{f,B_2}$ are both defined as in Lemma~\ref{prop:conformal-gauge}. Then
	$\Theta_{f,B_1} = \Theta_{f,B_2}$ if and only if
	$B_1 - B_2 \in \mathrm{Ann}(D) := \bigl\{\, C \in \Omega^2(M) \;:\; i_X C = 0 \ \ \forall X \in D \,\bigr\}.$
\end{lemma}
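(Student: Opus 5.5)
The plan is to reduce the equality $\Theta_{f,B_1}=\Theta_{f,B_2}$ to a pointwise condition on the difference $C:=B_1-B_2$. By the explicit formula of Lemma~\ref{prop:conformal-gauge}, the conformal parts $e^f\alpha$ cancel. Since $B\mapsto i_{\Lambda^\#\alpha}B$ is linear in $B$, we get for every $\alpha\in\Omega^1(M)$
$$
\Theta_{f,B_1}(\alpha)-\Theta_{f,B_2}(\alpha)=i_{\Lambda^{\#}\alpha}C .
$$
Hence $\Theta_{f,B_1}=\Theta_{f,B_2}$ holds if and only if $i_{\Lambda^\#\alpha}C=0$ for all $\alpha\in\Omega^1(M)$. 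It then remains to identify this condition with $C\in\mathrm{Ann}(D)$, where $D=\mathrm{Im}\,\Lambda^\#$.

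For the implication ``$C\in\mathrm{Ann}(D)\Rightarrow\Theta_{f,B_1}=\Theta_{f,B_2}$'': the vector field $\Lambda^\#\alpha$ lies in $D$ by definition. Therefore $i_{\Lambda^\#\alpha}C=0$ for every $\alpha$, and the two maps coincide.

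For the converse, assume $i_{\Lambda^\#\alpha}C=0$ for all $\alpha$ and let $X\in D$ be an arbitrary section of $D$. I will show that $X=\Lambda^\#\alpha$ for some global $1$-form $\alpha$, or at least that $i_XC=0$ follows anyway. This is the only step requiring care.

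Because $\Lambda$ has constant rank (the standing assumption of Section~\ref{sec:tools}), $\Lambda^\#:T^*M\to D$ is a surjective morphism of vector bundles. I would choose a complement $K^c$ of $K=\ker\Lambda^\#$ in $T^*M$, for instance via a metric. The restriction $\Lambda^\#|_{K^c}:K^c\to D$ is then a bundle isomorphism, so every smooth section $X$ of $D$ is $\Lambda^\#\alpha$ with $\alpha=(\Lambda^\#|_{K^c})^{-1}X$ smooth.

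Alternatively, a fully local argument suffices: $i_XC=0$ is a pointwise condition. At each point $x$, $X_x=\Lambda^\#_x(a)$ for some covector $a$, which extends to a global $1$-form $\alpha$ by a bump function. Then $(i_XC)_x=(i_{\Lambda^\#\alpha}C)_x=0$. Either way, $i_XC=0$ for all $X\in D$, i.e.\ $B_1-B_2\in\mathrm{Ann}(D)$.

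I expect no real obstacle beyond this surjectivity point. The hypothesis of Lemma~\ref{prop:conformal-gauge} (invertibility of $\Phi_{B_i}$) plays no role here; it only guarantees that $\Theta_{f,B_1}$ and $\Theta_{f,B_2}$ are morphisms of $\mathcal{T}(M)$.
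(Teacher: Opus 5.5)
Your proposal is correct and follows the paper's own argument. The conformal terms cancel, the difference is $i_{\Lambda^{\#}\alpha}(B_1-B_2)$, and its vanishing for all $\alpha$ is identified with $B_1-B_2\in\mathrm{Ann}(D)$. Your only addition is checking that every section of $D$ is, at least pointwise, of the form $\Lambda^{\#}\alpha$; the paper takes this for granted by reading $D$ as $\Lambda^{\#}(\Omega^1(M))$, and your constant-rank or bump-function argument supplies it correctly.
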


\begin{proof}
	Let $\alpha \in \Omega^1(M)$. We have
	$$
	\Theta_{f,B_1}(\alpha) - \Theta_{f,B_2}(\alpha)
	= \bigl(e^f \alpha + i_{\Lambda^{\#}\alpha} B_1\bigr) - \bigl(e^f \alpha + i_{\Lambda^{\#}\alpha} B_2\bigr)
	= i_{\Lambda^{\#}\alpha}(B_1 - B_2).
	$$
	Hence $\Theta_{f,B_1} = \Theta_{f,B_2}$ as maps on $\Omega^1(M)$ if and only if
	$i_{\Lambda^{\#}\alpha}(B_1 - B_2) = 0$ for every $\alpha \in \Omega^1(M)$, that is, if and only if $B_1 - B_2$
	vanishes identically on $D \times \mathfrak{X}^1(M)$. This is precisely
	the condition $B_1 - B_2 \in \mathrm{Ann}(D)$, which proves the lemma.
\end{proof}

\begin{theorem}\label{thm:structure-fibre}\mbox{ }\vspace*{-.2cm}
	\begin{enumerate}[label=(\roman*)]
		\item Every $\Phi \in \mathcal{F}_X(df)$ factors as $\Phi = \Psi \circ \Theta_{f,B}$ for some $B \in \Omega^2(M)$
		rendering $\Theta_{f,B}$ invertible, and some $C^{\infty}(M)$-linear automorphism $\Psi$ of $\Omega^1(M)$
		satisfying $\Psi(\alpha) - \alpha \in K$ for every $\alpha \in \Omega^1(M)$. In particular, $\Psi$ induces the
		identity on $\Omega^1(M)/K$;
		\item The assignment $B \mapsto \Theta_{f,B}$ induces an injection $\Omega^2(M)/\mathrm{Ann}(D) \hookrightarrow \mathcal{F}_X(df)$.
	\end{enumerate}
\end{theorem}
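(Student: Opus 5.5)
Part (i) is proved by reducing to Theorem~\ref{thm:classification} through Proposition~\ref{prop:reduction}. Given $\Phi\in\mathcal{F}_X(df)$, set $\chi=\tau(\Phi)=\Phi\circ\Psi_f^{-1}$, which is a morphism of $\mathcal{T}_{\mathrm{fix}}$ with source $Y_f=(\Lambda_f,H_f,\theta_f)$. Applying Theorem~\ref{thm:classification} to $\chi$, with $Y_f$ as source structure, gives a two-form $B$ with $\Phi_B=\mathrm{id}+B^\flat\circ\Lambda_f^\#$ invertible, together with a $C^\infty(M)$-linear automorphism $\Psi$ such that $\chi=\Psi\circ\Phi_B$ and $\Psi(\alpha)-\alpha\in\ker\Lambda_f^\#$ for all $\alpha$. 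Then
$$\Phi=\chi\circ\Psi_f=\Psi\circ(\Phi_B\circ\Psi_f)=\Psi\circ\Theta_{f,B},$$
where the last equality is the explicit computation in the proof of Lemma~\ref{prop:conformal-gauge}. The map $\Theta_{f,B}$ is invertible, being a composite of the invertible maps $\Phi_B$ and $\Psi_f$.

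It remains to identify $\ker\Lambda_f^\#$ with $K=\ker\Lambda^\#$. Since $\Lambda_f^\#=e^{-f}\Lambda^\#$ and $e^{-f}$ never vanishes, the two kernels coincide. Likewise $\operatorname{im}\Lambda_f^\#=\operatorname{im}\Lambda^\#=D$, so the condition on $\Psi$ is exactly the one stated, and $\Psi$ induces the identity on $\Omega^1(M)/K$.

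For (ii), the domain needs care. Only those $B$ for which $\Phi_B$ (equivalently $\Theta_{f,B}$) is invertible give elements of $\mathcal{F}_X(df)$. So I will read the statement as follows: the assignment is defined on the classes in $\Omega^2(M)/\mathrm{Ann}(D)$ of such admissible $B$, and it is injective there. Invertibility depends only on the class, because $\Theta_{f,B}$ itself depends only on the class of $B$ modulo $\mathrm{Ann}(D)$. This is the easy direction of Lemma~\ref{lem:B-equiv}: if $B_1-B_2\in\mathrm{Ann}(D)$, then $\Theta_{f,B_1}=\Theta_{f,B_2}$. Hence the map $[B]\mapsto\Theta_{f,B}$ is well defined. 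Lemma~\ref{prop:conformal-gauge} places $\Theta_{f,B}$ in $\mathcal{F}_X(df)$: its source is $X$ and $\Delta(\Theta_{f,B})=df$.

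Injectivity is the converse direction of Lemma~\ref{lem:B-equiv}: if $\Theta_{f,B_1}=\Theta_{f,B_2}$, then $B_1-B_2\in\mathrm{Ann}(D)$.

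One subtlety remains. Elements of $\mathcal{F}_X(df)$ are morphisms, which include a target structure, and not merely maps on $\Omega^1(M)$. Equal underlying maps must therefore give equal targets. This holds because the target is determined by the map. The anchor is $(\Lambda')^\#=\Lambda^\#\circ\Theta_{f,B}^{-1}$. The one-form is forced to be $\theta'=\theta+df$. The three-form $H'$ is recovered from the bracket $[\cdot,\cdot]_{H',\theta'}$, which is transported by $\Theta_{f,B}$; it is determined at least on $D$, which is all the bracket sees.

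I expect the main obstacle to be this last point. If $H'$ is only determined on $D\times D\times D$, I would state the injection at the level of underlying maps, which is how Lemma~\ref{lem:B-equiv} is phrased. I would then note that $H_f-d_{\theta_f}B$ is the canonical choice of target three-form.
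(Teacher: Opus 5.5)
Your proposal is correct and follows the paper's proof: for (i) you pass to $\chi=\tau(\Phi)=\Phi\circ\Psi_f^{-1}$, apply Theorem~\ref{thm:classification} with source $Y_f$, use $\ker\Lambda_f^\#=\ker\Lambda^\#=K$, and identify $\Phi_B\circ\Psi_f=\Theta_{f,B}$, and for (ii) you combine Lemmas~\ref{prop:conformal-gauge} and~\ref{lem:B-equiv}. On your last worry, the fallback to underlying maps is in fact forced, because the map does not determine $H'$: for $\Lambda=0$ one has $\Theta_{f,B}(\alpha)=e^f\alpha$ for every $B$, while the canonical targets $(0,\,e^fH-d_{\theta_f}B,\,\theta_f)$ vary with $B$; reading (ii) at the level of underlying maps, as Lemma~\ref{lem:B-equiv} is phrased, is exactly what the paper does.
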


\begin{proof}
(i) Let $\Phi\in\mathcal F_X(df)$. By Proposition~\ref{prop:reduction}, $\chi=\tau(\Phi)=\Phi\circ\Psi_f^{-1}$
is the unique element of $\mathcal{T}_{\mathrm{fix}}$ with source $Y_f$ such that $\Phi=\chi\circ\Psi_f$; this
identity, and its uniqueness, follow from $\tau$ being a bijection with inverse
$\sigma(\chi)=\chi\circ\Psi_f$. By Theorem~\ref{thm:classification}, There exists  a gauge
transformation $\Phi_B\in Gau(M)$ based at $Y_f$, together with a $ C^{\infty}(M)$-linear automorphism $\Psi$ of
$\Omega^{1}(M)$  such that
$$
\chi = \Psi\circ\Phi_B, \qquad \Psi(\alpha)-\alpha \in \ker\Lambda_f^{\#} \quad \text{for every } \alpha\in\Omega^{1}(M).
$$
Since $\Lambda_f^{\#}=e^{-f}\Lambda^{\#}$, we have $\ker\Lambda_f^{\#}=
\ker\Lambda^{\#}=K$. Hence
$\Psi(\alpha)-\alpha\in K$ for every $\alpha\in\Omega^{1}$, as required.

Moreover, by Lemma~\ref{prop:conformal-gauge},  one gets
$$
\Phi \;=\; (\Psi\circ\Phi_B)\circ\Psi_f \;=\; \Psi\circ(\Phi_B\circ\Psi_f) \;=\; \Psi\circ\Theta_{f,B},
$$
which proves (i).

(ii) The assignment $B\mapsto\Theta_{f,B}$, defined on the set of $2$-forms $B$ for which
$\Theta_{f,B}$ is invertible, takes values in $\mathcal F_X(df)$ by Lemma~\ref{prop:conformal-gauge}.
Lemma~\ref{lem:B-equiv} shows precisely that two such $2$-forms $B_1,B_2$ have the same image under
this assignment if and only if $B_1-B_2\in Ann(D)$; equivalently, the induced map on the quotient
$\Omega^{2}(M)/ Ann(D)$ is injective. This proves (ii) and completes the proof of the theorem.
\end{proof}
The following is a direct consequence of Theorem~\ref{thm:structure-fibre}.

\begin{corollary}\label{cor:nondeg}
	If $\Lambda$ is nondegenerate, then necessarily $\Psi = \mathrm{id}$
	in (i) of Theorem~\ref{thm:structure-fibre}, and the assignment
	$$
	B \longmapsto \Theta_{f,B}
	$$
	is a bijection between $\{\, B \in \Omega^2(M) : \Theta_{f,B} \text{ is invertible} \,\}$ and $\mathcal{F}_X(df)$.
\end{corollary}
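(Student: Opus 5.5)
Two claims need to be established: when $\Lambda$ is nondegenerate, the residual factor $\Psi$ in Theorem~\ref{thm:structure-fibre}(i) is the identity, and $B\mapsto\Theta_{f,B}$ is a bijection from the set of $2$-forms making $\Theta_{f,B}$ invertible onto $\mathcal{F}_X(df)$. The underlying observation is that nondegeneracy gives $K=\ker\Lambda^\#=0$, since $\Lambda^\#\colon T^*M\to TM$ is then a fiberwise isomorphism. It also gives $D=\mathrm{Im}\,\Lambda^\#=\mathfrak{X}^1(M)$, so that $\mathrm{Ann}(D)=\{C\in\Omega^2(M): i_XC=0\ \forall X\}=0$.

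\emph{Step 1 ($\Psi=\mathrm{id}$).} Let $\Phi\in\mathcal{F}_X(df)$ and take the factorization $\Phi=\Psi\circ\Theta_{f,B}$ from Theorem~\ref{thm:structure-fibre}(i). The theorem gives $\Psi(\alpha)-\alpha\in K=0$ for every $\alpha$, so $\Psi=\mathrm{id}$ and $\Phi=\Theta_{f,B}$. This already shows surjectivity of the assignment. We should also record that $\Theta_{f,B}=\Phi$ is invertible, because $\Phi$ is a morphism of the groupoid $\mathcal{T}(M)$. Equivalently, one can argue via $\Theta_{f,B}=\Phi_B\circ\Psi_f$ with $\Phi_B$ the gauge factor of Theorem~\ref{thm:classification} at $Y_f$. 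There is a subtle point here. Lemma~\ref{prop:conformal-gauge} is stated under the hypothesis that $\Phi_B=\mathrm{id}+B^\flat\circ\Lambda_f^\#$ is invertible, whereas the corollary indexes by invertibility of $\Theta_{f,B}$. These conditions are equivalent, since $\Theta_{f,B}=\Phi_B\circ\Psi_f$ and $\Psi_f$ is invertible; I will note this explicitly so that the domain of the assignment matches Lemma~\ref{prop:conformal-gauge}.

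\emph{Step 2 (well-definedness and injectivity).} Lemma~\ref{prop:conformal-gauge}, applied with the equivalence just noted, shows that every $B$ with $\Theta_{f,B}$ invertible yields an element of $\mathcal{F}_X(df)$. For injectivity, Lemma~\ref{lem:B-equiv} gives $\Theta_{f,B_1}=\Theta_{f,B_2}$ if and only if $B_1-B_2\in\mathrm{Ann}(D)=0$, that is, $B_1=B_2$. Equivalently, one can invoke Theorem~\ref{thm:structure-fibre}(ii) with $\Omega^2(M)/\mathrm{Ann}(D)=\Omega^2(M)$. To justify $\mathrm{Ann}(D)=0$ pointwise: if $i_XC=0$ for all $X\in D=TM$, then $C$ vanishes on all pairs of vectors, so $C=0$.

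Combining the two steps gives the bijection. I expect no real obstacle. The only care needed is the matching of the invertibility conditions in Step 1 and the fact that constant rank is automatic in the nondegenerate case, so $D=TM$ holds globally.
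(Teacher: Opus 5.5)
Your proof is correct and matches the paper, which treats the corollary as a direct consequence of Theorem~\ref{thm:structure-fibre}: $K=0$ forces $\Psi=\mathrm{id}$ and gives surjectivity via part (i), and $\mathrm{Ann}(D)=0$ turns the injection of part (ii) into injectivity on $\Omega^2(M)$ itself. Your remark that invertibility of $\Theta_{f,B}$ is equivalent to invertibility of $\mathrm{id}+B^\flat\circ\Lambda_f^\#$ (because $\Theta_{f,B}=\Phi_B\circ\Psi_f$) is a useful detail that the paper leaves implicit.
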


\begin{remark}\label{rem:fiber-remark}
	Propositions~\ref{prop:reduction} together with Lemma~\ref{prop:conformal-gauge}, and
	Theorem~\ref{thm:structure-fibre} show that the fiber $\Delta^{-1}(df)$, for $f \in \mathrm{Cas}(\Lambda)$, is governed
	by exactly the same data as the fiber $\Delta^{-1}(0) = Mor(\mathcal{T}_{\mathrm{fix}})$ itself, namely a $2$-form $B$
	modulo $\mathrm{Ann}(D)$, together with an automorphism $\Psi$ trivial on $\Omega^1(M)/K$, merely translated by
	the fixed reference point $\Psi_f$. The Casimir function $f$ thus records which exact class
	is realized, while $B$, and $\Psi$ (when $K \neq 0$) parametrize the fiber above it.
\end{remark}

We now consider a closed $1$-form $\eta \in Z^1_{dR}(M)$ with $\Lambda^{\#}\eta = 0$, and we ask whether $\eta$ itself or, more generally, any class
represented by such a $1$-form can arise as $\Delta(\Phi)$ for some $\Phi \in \mathrm{Mor}(\mathcal{T}(M))$. When $[\eta] \neq 0$, no global potential exists, and the analysis above no longer applies. If $\eta \in \Omega^1(M)$ satisfies $\Lambda^{\#}\eta = 0$ and $\eta_x \neq 0$ at some point $x \in M$, then
$K_x = \ker \Lambda^{\#}_x \neq 0$. In particular, if $\Lambda$ is nondegenerate on $M$,
any $1$-form $\eta$ with $\Lambda^{\#}\eta = 0$ is identically zero, and the question of
non-exact fibers is vacuous. By Proposition~\ref{preservation_ancre}, every morphism of $\mathcal{T}(M)$ preserves the
characteristic sub-bundle $K$. Thus non-exact shifts can therefore only occur when
the source bivector field $\Lambda$ fails to be non-degenerate.

\begin{definition}\label{def:Tcg}
	We call the conformal-gauge sub-groupoid of $\mathcal{T}(M)$ the sub-groupoid generated, under composition and inversion, by $\mathrm{Gau}(M)$
	and by all conformal Casimir transformations $\Psi_f$, $f \in \mathrm{Cas}(\Lambda)$, taken over
	every object $(\Lambda, H, \theta)$ of $\mathcal{T}(M)$. We denote it by $\mathcal{T}_{\mathrm{cg}}$.
\end{definition}
We  now suppose that $M$ is a manifold with non-zero first
de~Rham cohomology, that is \ $H_{\mathrm{dR}}^{1}(M)\neq 0$.
\begin{proposition}\label{prop:nogo}
	Let $\eta$ be a closed $1$-form with
	$[\eta] \neq 0 \in H^1_{dR}(M, \mathbb{R})$,
	$$
	\Delta^{-1}(\eta) \cap \mathrm{Mor}(\mathcal{T}_{\mathrm{cg}}) = \varnothing.
	$$
\end{proposition}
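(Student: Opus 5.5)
The plan is to prove the stronger statement $\Delta\bigl(\mathrm{Mor}(\mathcal{T}_{\mathrm{cg}})\bigr)\subseteq B^1_{\mathrm{dR}}(M)$, i.e.\ that every morphism of $\mathcal{T}_{\mathrm{cg}}$ shifts $\theta$ by an exact $1$-form. The claim then follows at once: if $\Phi\in\mathrm{Mor}(\mathcal{T}_{\mathrm{cg}})$ satisfied $\Delta(\Phi)=\eta$, then $\eta$ would be exact, contradicting $[\eta]\neq 0$ in $H^1_{dR}(M,\mathbb{R})$. The argument uses only the functoriality of $\Delta$ from Property~\ref{prop:Delta-functeur} and the values of $\Delta$ on the generators of $\mathcal{T}_{\mathrm{cg}}$ given in Definition~\ref{def:Tcg}.

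\emph{Step 1: values on generators.} For a gauge transformation $\Phi_B\in\mathrm{Gau}(M)$, Proposition~\ref{prop:preserve} shows that its target is $(\Lambda_B,H-d_\theta B,\theta)$. Hence $\Delta(\Phi_B)=0$, which is exact. For a conformal Casimir transformation $\Psi_f$ with $f\in\mathrm{Cas}(\Lambda)$ at any object, Proposition~\ref{thm:main} gives $\Delta(\Psi_f)=df$. This also holds when $f$ is locally constant, in which case $\Delta(\Psi_f)=0$. By Property~\ref{prop:Delta-functeur}(iii), the inverses satisfy $\Delta(\Phi_B^{-1})=0$ and $\Delta(\Psi_f^{-1})=-df=d(-f)$. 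Both are exact.

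\emph{Step 2: closure under composition.} Since $\mathcal{T}_{\mathrm{cg}}$ is generated under composition and inversion, every morphism of $\mathcal{T}_{\mathrm{cg}}$ is a finite composable word $\Phi=\gamma_n\circ\cdots\circ\gamma_1$. Each $\gamma_i$ is a generator or the inverse of a generator, and identities are the empty word, with $\Delta=0$ by Property~\ref{prop:Delta-functeur}(i). Property~\ref{prop:Delta-functeur}(ii), applied inductively on $n$, gives $\Delta(\Phi)=\sum_i\Delta(\gamma_i)=d\bigl(\sum_i \epsilon_i f_i\bigr)$. Here $\epsilon_i\in\{0,\pm1\}$, and the $f_i$ are Casimir functions of the respective intermediate bivectors. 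The only point to check is that these Casimir functions need not be Casimirs of the original $\Lambda$. This does not matter, because each $f_i$ is a globally defined smooth function on $M$, so the sum is a global potential. Therefore $\Delta(\Phi)\in B^1_{\mathrm{dR}}(M)$.

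\emph{Main obstacle.} The only delicate point is making precise what ``generated'' means in Definition~\ref{def:Tcg}, so that every morphism really is such a finite word. The intended meaning is the smallest sub-groupoid containing the generators. That sub-groupoid is exactly the set of finite composable words in generators and their inverses, because this set already contains all identities and is closed under composition and inversion. Once this is fixed, the rest is the additivity of $\Delta$. No use of the hypothesis $H^1_{dR}(M)\neq 0$ is needed beyond guaranteeing that such an $\eta$ exists, so the statement is non-vacuous.
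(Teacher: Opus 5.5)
Your proposal is correct and follows the paper's own proof: compute $\Delta$ on the generators and their inverses ($0$ for gauge transformations, $\pm df$ for conformal Casimir transformations), then use additivity of $\Delta$ along a composable word to land in the subgroup $B^1_{\mathrm{dR}}(M)$. Your extra remarks, on what ``generated'' means and on the Casimir functions living at different intermediate objects, are refinements of the same argument.
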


\begin{proof}
	It suffices to show that the image under $\Delta$ of every morphism belonging to $\mathcal{T}_{\mathrm{cg}}$ lies in the
	subgroup $B^1_{dR}(M)$ of exact closed $1$-forms.
	
	We compute  $\Delta$ on each type of generator. If
	$\Phi_B \in \mathrm{Gau}(M)$ is a gauge transformation, then $\Delta(\Phi_B) = 0 \in B^1_{dR}(M)$. If $\Psi_f$ is a conformal Casimir transformation, then
	$\Delta(\Psi_f) = df$, which is exact.
	
	Let $\chi \in \mathrm{Mor}(\mathcal{T}_{\mathrm{cg}})$. By Definition~\ref{def:Tcg}, $\chi$ is obtained from $n \in \mathbb{N}^*$
	composable words.
	$$
	\chi = \chi_n \circ \chi_{n-1} \circ \cdots \circ \chi_1,
	$$
	where each $\chi_k$ is either a gauge transformation, a conformal Casimir transformation, or the
	inverse of one of these (based at the appropriate intermediate object of $\mathcal{T}(M)$ so that the
	composition is defined at each stage). The additivity of $\Delta$ under
	composition gives
	$$
	\Delta(\chi) = \Delta(\chi_n) + \Delta(\chi_{n-1}) + \cdots + \Delta(\chi_1),
	$$
	where $\Delta(\chi_k)\in\{0,df,-df\}\subset B^1_{dR}(M)$. Since
	$B^1_{dR}(M)$ is a subgroup of $\bigl(Z^1_{dR}(M), +\bigr)$,  $\Delta(\chi) \in B^1_{dR}(M)$.
	Therefore  $\Delta(\mathrm{Mor}(\mathcal{T}_{\mathrm{cg}})) \subseteq B^1_{dR}(M)$. Hence $\Delta^{-1}(\eta) \cap \mathrm{Mor}(\mathcal{T}_{\mathrm{cg}}) = \varnothing$.
\end{proof}

Proposition~\ref{prop:nogo} shows that the conformal-gauge
sub-groupoid $\mathcal{T}_{\mathrm{cg}}$ cannot produce morphisms whose $\Delta$-image is a non-exact closed $1$-form. The full groupoid $\mathcal{T}(M)$ can, however have 
non-empty fibers above non-exact classes: fixe a closed  non-exact $1$-form $\eta$. Consider the two $\theta$-atP structures
$(\Lambda, H, \theta) = (0, 0, 0)$ and $(\Lambda', H', \theta') = (0, 0, \eta)$ on $M$. The identity map $\Phi = \mathrm{id}: \Omega^1(M) \longrightarrow \Omega^1(M)$ is
trivially a morphism of $\mathcal{T}(M)$ from $(0, 0, 0)$ to $(0, 0, \eta)$ with $\Delta(\Phi) = \theta' - \theta = \eta$. The  description of 
$\Delta^{-1}(\eta)$ for non-exact $\eta$ remains  open.

\end{document}